\newcommand{\mathsym}[1]{{}}
\newtheorem{thm}{Theorem}[section]
\newtheorem{cor}[thm]{Corollary}
\newtheorem{lem}[thm]{Lemma}
\theoremstyle{definition}
\numberwithin{equation}{section}
\theoremstyle{remark}
\newtheorem{rem}{Remark}[section]
\theoremstyle{example}
\theoremstyle{conjecture}
\newtheorem{con}{Conjecture}[section]
\newcommand{\be}{\begin{equation}}
\newcommand{\ee}{\end{equation}}
\newcommand{\ba}{\aligned}
\newcommand{\ea}{\endaligned}
\newcommand{\n}{\nabla}
\newcommand{\ld}{\lambda}
\newcommand{\into}{\int_{\Omega}}
\newcommand{\al}{\alpha}
\newcommand{\mr}{\mathrm}
\newcommand{\lf}{\left}
\newcommand{\rt}{\right}
\begin{document}
\title[estimates of the gaps between consecutive eigenvalues of Laplacian]{  estimates of the gaps between consecutive eigenvalues of Laplacian }
\author [Tao Zheng]{ Daguang Chen$^*$, Tao Zheng,  Hongcang Yang$^{**}$}

\subjclass[2010]{Primary 35P15, 58C40; Secondary 58J50.}
\thanks {}
\keywords{Laplacian, consecutive eigenvalues, test function, Riemannian manifold, Hyperbolic space}
%\date{}
\thanks{*The work of the first named author was partially supported by NSFC grant No. 11101234.}
 \thanks{**The work of the third named author was partially supported by NSFC and SF of CAS.}

\begin{abstract}
By the calculation of the gap of the consecutive eigenvalues of $\Bbb S^n$ with standard metric, using the Weyl's asymptotic formula, we know the order of the upper bound of this gap is $k^{\frac{1}{n}}.$
We conjecture that this order is also right for general Dirichlet problem of the Laplace operator, which is optimal if this conjecture holds, obviously.
In this paper, using new method, we solve this conjecture in the Euclidean space case intrinsically. We think our method is valid for
the case of general Riemannian manifolds and give some examples directly.
\end{abstract}
\maketitle
\renewcommand{\sectionmark}[1]{}
%-------------------------------------------------------------------------
\section{introduction}
Let
$\Omega$
be a bounded domain in an
$n$-dimensional
complete Riemannian manifold
$M$
with boundary (possible empty).
Then the Dirichlet eigenvalue problem of Laplacian on
$
\Omega
$
is given by
\begin{equation}\label{DLRM}
\left\{
\aligned
\Delta u=&-\ld u,\;\mbox{in}\;\Omega,\\
u=&0,\;\;\;\;\;\;\;\;\mbox{on}\; \partial \Omega,
\endaligned
\right.
\end{equation}
where
$
\Delta
$
is Laplaican on $M$.
It is well known that the spectrum of (\ref{DLRM}) has the real and purely discrete eigenvalues
\begin{equation}\label{dandiao}
0<\ld_1< \ld_2\leq \ld_3\leq \cdots\nearrow\infty,
\end{equation}
where each
$
\ld_i
$
has finite multiplicity which is repeated
according to its multiplicity. The corresponding orthonormal basis of
real eigenfunctions will be denoted
$
\{u_j\}_{j=1}^\infty
$.
We go forward under the assumption that
$
L^2(\Omega)
$
represents the real Hilbert space of
real-valued
$
L^2
$
functions on
$\Omega$.
We put
$
\ld_{0}=0
$
if
$
\partial \Omega= \emptyset
$.

An important aspect of estimating higher eigenvalues is to obtain as precise as possible the estimate of gaps of  consecutive eigenvalues of (\ref{DLRM}).
In this regard, we will review some important results on the estimates of  eigenvalue problem (\ref{DLRM}).

For the upper bound of the gap of consecutive eigenvalues of (\ref{DLRM}), when $\Omega$
is a bounded domain in an
$2$-dimensional
Euclidean space
$
\Bbb{R}^2,
$
in 1956, Payne, P\'{o}lya and Weinberger (cf.\cite{PPW55}
and \cite{PPW56}) proved
\begin{equation}\label{ppw1}
\ld_{k+1}-\ld_{k}\leq\frac{2}{k}\sum_{i=1}^{k}\ld_{i}.
\end{equation}

C. J. Thompson \cite{thompson1969}, in 1969, extended (\ref{ppw1}) to $n$-dimensional case and obtained
\begin{equation}\label{ppwtype}
\ld_{k+1}-\ld_{k}\leq\frac{4}{nk}\sum_{i=1}^{k}\ld_{i}.
\end{equation}
Hile and Protter \cite{HP} improved
(\ref{ppwtype}) to
\begin{equation}\label{hp1}
\sum_{i=1}^{k}\frac{\ld_{i}}{\ld_{k+1}-\ld_{i}}\geq\frac{nk}{4}.
\end{equation}
Yang (cf. \cite{Yang91} and more recently \cite{CY4}) has obtained a sharp inequality
\begin{equation}\label{CY41}
\sum_{i=1}^{k}(\ld_{k+1}-\ld_{i})\left(\ld_{k+1}-\left(1+\frac{4}{n}\right)\ld_{i}\right)\leq0.
\end{equation}
From
(\ref{CY41}),
one can infer
\begin{equation}\label{CY42}
\ld_{k+1}\leq \frac{1}{k}\left(1+\frac{4}{n}\right)\sum_{i=1}^{k}\ld_{i}.
\end{equation}
The inequalities
(\ref{CY41})
and
(\ref{CY42})
are called Yang's first inequality and second inequality, respectively (cf. \cite{Ash98, Ash02, ab1996, harrellstubbe}).
Also we note that Ashbaugh and Benguria gave an optimal estimate for
$k=1$ (cf. \cite{ab1991,ab1992, ab19921}).
From the Chebyshev's
inequality, it is easy to prove the following relations
$$
(\ref{CY41})\Longrightarrow(\ref{CY42})\Longrightarrow(\ref{hp1})\Longrightarrow(\ref{ppwtype}).
$$
From (\ref{CY41}), Cheng and Yang \cite{CY2005} obtained
\begin{equation}\label{Yangdiff1}
\ld_{k+1}-\ld_{k}\leq  2
\left[\left(\frac{2}{n}\frac{1}{k}\sum\limits_{i=1}^{k}\ld_{i}\right)^{2}-
\left(1+\frac{4}{n}\right)
\frac{1}{k}\sum\limits_{i=1}^{k}\left(\ld_{i}-\frac{1}{k}\sum\limits_{j=1}^{k}\ld_{j}\right)^{2}\right]^{\frac{1}{2}}.
\end{equation}

Cheng and Yang \cite{CY4}, using their recursive formula, obtained
\begin{equation}\label{diffyong}
\ld_{k+1}\leq C_{0}(n)k^{\frac{2}{n}}\ld_{1},
\end{equation}
where $C_{0}(n)\leq 1+\frac{4}{n}$ is a constant (see Cheng and
Yang's paper \cite{CY4}). From the Weyl's asymptotic formula (cf.
\cite{weyl}), we know that the upper bound (\ref{diffyong}) of Cheng
and Yang  is best possible in the meaning of the order on $k$.

For a complete Riemannian manifold $M$, from the Nash's theorem \cite{Nash},  there exists an isometric
immersion
$$
\psi\,:\;M\longrightarrow \Bbb{R}^{N},
$$
where $\Bbb{R}^{N}$ is Euclidean space. The mean curvature of the immersion $\psi$ is denoted by $H$ and $|H|$ denotes its norm. Define
$$
\Phi=\{\psi\;|\psi\; \mbox{is an isometric immersion from $M$ into Euclidean space}\}.
$$

 When $\Omega$ is a bounded domain of a complete Riemannian manifold
$M$, isometrically immersed into a Euclidean space $\Bbb{R}^{N}$, Cheng and the first author \cite{ChenCheng} (cf. \cite{elsoufi,harrellmichel2007})obtained
\begin{equation}\label{chenchengwaiyun}
\sum_{i=1}^{k}(\ld_{k+1}-\ld_{i})^2\leq\frac{4}{n}\sum_{i=1}^{k}(\ld_{k+1}-\ld_{i})\left(\ld_{i}+\frac{n^2}{4}H_{0}^2\right),
\end{equation}
where
\begin{equation}\label{jinruchangshu}
H_{0}^{2}= \inf_{\psi\in\Phi} \sup_{\Omega}|H|^{2}.
\end{equation}
Using the recursive formula in Cheng and Yang \cite{CY4}, Cheng and the first author in \cite{ChenCheng} also deduced
\begin{equation}\label{chenchenganhan}
\ld_{k+1}+\frac{n^2}{4}H_{0}^2\leq C_{0}(n)k^{\frac{2}{n}}\left(\ld_{1}+\frac{n^2}{4}H_{0}^2\right),
\end{equation}
where
$H_{0}^{2}, C_{0}(n)$
are  given by (\ref{jinruchangshu}) and (\ref{diffyong}) respectively.

From (\ref{chenchengwaiyun}), we can get the gaps of the consecutive eigenvalues of Laplacian
\begin{equation}\label{chenchengwaiyundiff}
\ld_{k+1}-\ld_{k}\leq  2
\left(\left(\frac{2}{n}\frac{1}{k}\sum\limits_{i=1}^{k}\ld_{i}+\frac{n}{
2}H_{0}^2\right)^{2}- \left(1+\frac{4}{n}\right)
\frac{1}{k}\sum\limits_{i=1}^{k}\left(\ld_{i}-\frac{1}{k}\sum\limits_{j=1}^{k}\ld_{j}\right)^{2}\right)^{\frac{1}{2}}.
\end{equation}
\begin{rem}
When
$
\Omega
$
is an
$n$-dimensional
compact homogeneous Riemannian manifold, a compact minimal submanifold without boundary and a connected bounded domain in the standard unit sphere
$
\mathbb{S}^{N}(1)
$,
and  a connected bounded domain and a compact complex hypersurface without boundary of the  complex projective space
$\mathbb {CP}^{n}(4)$
with holomorphic sectional curvature 4,
many mathematicians have sutudied the universal inequalities for eigenvalues and the difference of the consecutive eigenvalues
(cf. \cite{CY2005,CY2006,CY2009,harrell,harrellmichel,harrellstubbe,PLi,YangYau,Leung,sun2008}).
\end{rem}
\begin{rem}
Another  problem is  the
lower bound of the gap of the first two eigenvalues. In general, there exists  the famous fundamental gap conjecture for the Dirichlet eigenvalue problem of the Schr\"{o}dinger
operator(cf.\cite{berg1983,ab1989,yau1986,wyau,yuzhong}and the references therein). The fundamental gap conjecture was solved by B. Andrews and J. Clutterbuck in \cite{AC11} .
\end{rem}

From (\ref{Yangdiff1}) and (\ref{chenchengwaiyundiff}), it is not difficult to see that both Yang's
estimate for the gap of  consecutive eigenvalues of (\ref{DLRM}) implicited in \cite{Yang91} and the estimate
from \cite{ChenCheng} are on the order of $k^{\frac{3}{2n}}$. However, by  the calculation of the gap of the consecutive eigenvalues of $\Bbb S^n$ with standard metric, using the Weyl's asymptotic formula, we know the order of the upper bound of this gap is $k^{\frac{1}{n}}.$ Hence we conjecture that
\begin{con}
Let $\Omega$ be a bounded domain in an $n$-dimensional complete Riemannian manifold $M$.For the Dirichlet problem (\ref{DLRM}), the upper bound for the gap of consecutive eigenvalues of Laplacian should be
\begin{equation}\label{Conj}
\ld_{k+1}-\ld_{k}\leq C_{n,\Omega}k^{\frac{1}{n}},
\end{equation}
where $C_{n,\Omega}$ is a constant dependent on  $\Omega$ itself and the dimension $n$.
\end{con}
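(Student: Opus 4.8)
The plan is to establish the Euclidean case of \eqref{Conj}, i.e.\ for a bounded domain $\Omega\subset\mathbb{R}^n$, by reducing the gap bound to the construction of a single good trial function. First I would record the elementary but crucial reduction: it suffices to produce, for each $k$, one function $\phi\in H_0^1(\Omega,\mathbb{C})$ with $\phi\perp u_1,\dots,u_k$ in $L^2(\Omega)$ and Rayleigh quotient $R(\phi)=\|\nabla\phi\|^2/\|\phi\|^2\le \ld_k+C_{n,\Omega}\,k^{1/n}$. Indeed, adjoining such a $\phi$ to $u_1,\dots,u_k$ gives a $(k+1)$-dimensional subspace $W\subset H_0^1(\Omega)$; since $\langle\nabla u_j,\nabla\phi\rangle=\ld_j\langle u_j,\phi\rangle=0$ by the eigenvalue equation, the energy splits and $\max_{f\in W}R(f)\le \ld_k+C_{n,\Omega}k^{1/n}$, whence $\ld_{k+1}\le \ld_k+C_{n,\Omega}k^{1/n}$ by the min--max principle. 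The feature that makes the argument \emph{intrinsic} and tied to the Euclidean setting is that $\phi$ is built from plane-wave modulations $e^{i\xi\cdot x}u_m$ of the eigenfunctions: these remain in $H_0^1(\Omega)$ because $u_m$ does, so no isometric (Nash) embedding as in \eqref{chenchengwaiyun}--\eqref{chenchenganhan} is needed.

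Second, I would run an averaged variational principle over the modulation parameter. For real $g\in\operatorname{span}(u_1,\dots,u_k)$ and $\xi\in\mathbb{R}^n$ one has $\|e^{i\xi\cdot x}g\|^2=\|g\|^2$, $\|\nabla(e^{i\xi\cdot x}g)\|^2=\|\nabla g\|^2+|\xi|^2\|g\|^2$, and the overlaps are Fourier coefficients, $\langle e^{i\xi\cdot x}g,u_j\rangle=\widehat{gu_j}(-\xi)$. Writing $P$ for the $L^2$-projection onto the orthogonal complement of $u_1,\dots,u_k$ and using $\ld_{k+1}\|Pf\|^2\le\|\nabla Pf\|^2=\|\nabla f\|^2-\sum_{j\le k}\ld_j|\langle f,u_j\rangle|^2$, I would integrate the resulting inequality over a frequency ball $\{|\xi|\le\rho\}$. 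The linear-in-$\xi$ term vanishes by symmetry, $\int_{|\xi|\le\rho}|\xi|^2\,d\xi=\tfrac{n}{n+2}\rho^2|B_\rho|$, and the averaged overlaps are controlled by Plancherel through $\int_{\mathbb{R}^n}|\widehat{gu_j}|^2=(2\pi)^n\int_\Omega g^2u_j^2$. This yields a clean inequality relating $\ld_{k+1}-\ld_k$, the increment $\rho^2$, and the total Fourier mass of the products $gu_j$ landing just above the sphere $\{|\xi|^2=\ld_k\}$.

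Third, I would close the estimate by exploiting the reformulation of \eqref{Conj} as a \emph{bounded-frequency-gap} statement. Since \eqref{diffyong} gives $\ld_k\asymp k^{2/n}$, the bound \eqref{Conj} is equivalent to $\sqrt{\ld_{k+1}}-\sqrt{\ld_k}\le C_{n,\Omega}$, i.e.\ the frequencies $\sqrt{\ld_k}$ have gaps bounded uniformly in $k$. Accordingly I would take $\rho$ of order one, so that the energy increment is $\rho^2$ and, after multiplying by $\sqrt{\ld_k}\asymp k^{1/n}$, produces exactly the target order $k^{1/n}$. The construction then amounts to showing that, for a suitable $g$ (a superposition of the top modes whose Fourier transform concentrates near $|\xi|=\sqrt{\ld_k}$), enough modulated mass survives the projection $P$ to force a new eigenvalue in $(\ld_k,\ld_k+C_{n,\Omega}k^{1/n}]$; stray error terms are absorbed using the polynomial growth $\ld_{k+1}\le C_0(n)k^{2/n}\ld_1$ from \eqref{diffyong}.

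The hard part will be the degeneracy of $P$ for small shifts: a modulation $e^{i\xi\cdot x}u_k$ with $|\xi|$ small is still almost entirely contained in $\operatorname{span}(u_1,\dots,u_k)$, so $\|Pf\|^2$ is tiny and the naive Rayleigh quotient blows up. Overcoming this requires choosing the superposition $g$ and a frequency shell \emph{just above} $\sqrt{\ld_k}$ so that the surviving mass is bounded below, and this in turn rests on a uniform-in-$k$ counting estimate for the number of Dirichlet eigenvalues in thin spherical shells—local control of the spectral density near the Fermi level $\{|\xi|^2=\ld_k\}$. Establishing that local density bound is the crux; it is precisely the ingredient that uses the flat Fourier analysis of $\mathbb{R}^n$ and does not transfer verbatim to a general Riemannian manifold, which is why only the Euclidean case of the conjecture is settled here.
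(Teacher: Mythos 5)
Your proposal follows a genuinely different route from the paper (an averaged variational principle over plane-wave modulations $e^{i\xi\cdot x}g$ with a Plancherel computation in the frequency ball), but it does not close: the step you yourself label ``the crux'' --- a uniform-in-$k$ lower bound on the mass of the modulated trial functions surviving the projection $P$ off $\mathrm{span}(u_1,\dots,u_k)$, equivalently a local spectral density estimate in thin shells around $\{|\xi|^2=\ld_k\}$ --- is not established, and it is not a routine verification. When you integrate $\ld_{k+1}\|Pf\|^2\le\|\n f\|^2-\sum_{j\le k}\ld_j|\langle f,u_j\rangle|^2$ over $|\xi|\le\rho$ with $\rho=O(1)$, Plancherel controls $\int_{\mathbb{R}^n}|\widehat{gu_j}|^2$ but not the portion of that Fourier mass inside the ball; without a quantitative bound on how much of $\widehat{gu_j}$ concentrates there, the averaged $\|Pf\|^2$ has no lower bound and the Rayleigh quotient estimate degenerates exactly as you describe. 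Averaged variational principles of this type (Harrell--Stubbe, El Soufi--Harrell--Ilias) are known to yield Riesz-mean and summed inequalities precisely because the summation washes out this local density issue; extracting an individual consecutive gap of order $k^{1/n}$ from them is an open-ended problem, not a finishing touch. As written, the proposal is a reduction of the conjecture to a different unproved statement.

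For comparison, the paper avoids spectral counting entirely. Its key Lemma uses the trial functions $\varphi_i=gu_i-\sum_{j\le k}a_{ij}u_j$ and a Lagrange-multiplier optimization over the tail coefficients $\{a_{ij}\}_{j>k}$ to show the extremal configuration concentrates on just the two levels $\ld_{k+1},\ld_{k+2}$, which yields
\begin{equation*}
\Big((\ld_{k+1}-\ld_{i})+(\ld_{k+2}-\ld_{i})\Big)\into |\n g|^2u_{i}^2
\leq \into\Big|2\n g\cdot\n u_{i}+ u_{i}\Delta g \Big|^2
+(\ld_{k+1}-\ld_{i})(\ld_{k+2}-\ld_{i})\into |gu_{i}|^2.
\end{equation*}
Taking $g=e^{\sqrt{-1}\,\al f}$ with $|\n f|=1$ and optimizing over $\al$ produces the square-root gap bound
$\bigl(\sqrt{\ld_{k+2}-\ld_i}-\sqrt{\ld_{k+1}-\ld_i}\bigr)^2\le\into(2\n f\cdot\n u_i+u_i\Delta f)^2$, and with $f=x_l$, $i=1$, summed over $l$, this gives $\ld_{k+2}-\ld_{k+1}\le 4\sqrt{\ld_1/n}\,\sqrt{\ld_{k+2}}$; the order $k^{1/n}$ then comes from the Cheng--Yang bound $\ld_{k+2}\le C_0(n)(k+1)^{2/n}\ld_1$. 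You correctly identified the reformulation $\sqrt{\ld_{k+1}}-\sqrt{\ld_k}\le C$ as the right target, and your single-trial-function reduction via min--max is fine; what is missing is a mechanism, like the paper's two-level extremal analysis, that produces the square-root gap without requiring control of the local eigenvalue distribution.
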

\begin{rem}
The famous Panye-P\'{o}lya-Weinberger conjecture(cf.\cite{PPW55,PPW56,thompson1969,ab93,ab932})claims that, when $M=\mathbb{R}^n$, for Dirichlet eigenvalue problem (\ref{DLRM}), one should have
\be\label{ppwcon3}
\dfrac{\lambda_{k+1}}{\lambda_k}\leq\left.\dfrac{\lambda_2}{\lambda_1}\right|_{\Bbb B^n}= \left(\dfrac{j_{n/2,1}}{j_{n/2-1,1}}\right)^2,
\ee
where $\mathbb{B}^{n}$ is the $n$-dimensional unit ball in $\Bbb{R}^n,$ and
$j_{p,k}$ is the $k^{th}$ positive zero  of the Bessel function
$J_p(t)$. From the Weyl's asymptotic formula and (\ref{ppwcon3}), we know that the order of the upper bounder of the consective eigenvalues of eigenvalue problem (\ref{DLRM}) is just $k^{\frac{2}{n}}$. This is why we make this conjecture.
\end{rem}

In the following, the constant $C_{n,\Omega}$ are allowed to be different in different cases.

When $\Omega$ is a bouded domain in $\Bbb{R}^n$, for the Dirichlet eigenvalue problem
(\ref{DLRM}), we give the affirmative answer to  the  conjecture (\ref{Conj}).
\begin{thm}\label{MainThm1}
Let $\Omega \subset \Bbb R^n$ be a bounded domain in Euclidean space $\Bbb R^n$  and $\ld_{k}$ be the
$k^{th}$ eigenvalue of  the Dirichlet eigenvalue problem
(\ref{DLRM}). Then we have
\begin{equation}\label{U-inequ1}
\ld_{k+1}-\ld_{k}\leq C_{n,\Omega}k^{\frac 1n},
\end{equation}
where $C_{n,\Omega}= 4\ld_1\sqrt{\frac{C_{0}(n)}{n}}$, $C_{0}(n)$ is given by $(\ref{diffyong})$.
\end{thm}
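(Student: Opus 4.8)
The plan is to bound the gap by the Rayleigh--Ritz method using the coordinate functions as multipliers, and then to supply a lower bound for the resulting normalizing constant that is governed by $\lambda_1$ rather than by $\lambda_k$; the latter is the genuinely new ingredient. First I would fix $L^2$-normalized eigenfunctions $u_1,\dots,u_{k+1}$ and, for each coordinate $x_\alpha$ ($\alpha=1,\dots,n$), set $b_{\alpha j}=\int_\Omega x_\alpha u_k u_j$ and form the trial functions $\phi_\alpha=x_\alpha u_k-\sum_{j=1}^k b_{\alpha j}u_j$, which are orthogonal to $u_1,\dots,u_k$ and hence admissible in the variational characterization of $\lambda_{k+1}$. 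Using $-\Delta(x_\alpha u_k)=\lambda_k x_\alpha u_k-2\partial_\alpha u_k$ together with the orthogonality, the Rayleigh quotient collapses to
\begin{equation*}
(\lambda_{k+1}-\lambda_k)\,\|\phi_\alpha\|^2\le -2\int_\Omega \phi_\alpha\,\partial_\alpha u_k .
\end{equation*}
Since $\phi_\alpha\perp u_1,\dots,u_k$ I may replace $\partial_\alpha u_k$ by its component orthogonal to that span before applying Cauchy--Schwarz; squaring, summing over $\alpha$, and using $\sum_\alpha\int_\Omega|\partial_\alpha u_k|^2=\int_\Omega|\nabla u_k|^2=\lambda_k$ then gives
\begin{equation*}
(\lambda_{k+1}-\lambda_k)^2\sum_{\alpha=1}^n\|\phi_\alpha\|^2\le 4\lambda_k .
\end{equation*}

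The decisive step is the lower bound $\sum_{\alpha=1}^n\|\phi_\alpha\|^2\ge \tfrac{n}{4\lambda_1}$, a quantitative statement that multiplication by the coordinates pushes $u_k$ a definite amount out of $\operatorname{span}\{u_1,\dots,u_k\}$, the amount being measured by $1/\lambda_1$. This is an uncertainty principle in disguise: since $\|\phi_\alpha\|^2=\operatorname{dist}_{L^2}^2\!\big(x_\alpha u_k,\operatorname{span}\{u_1,\dots,u_k\}\big)$ is translation invariant, for $k=1$ it is exactly Heisenberg's inequality $\sum_\alpha\operatorname{Var}_{u_1^2}(x_\alpha)\ge n^2/(4\lambda_1)$, where the momentum variance $\sum_\alpha\int_\Omega|\partial_\alpha u_1|^2=\lambda_1$ supplies the factor. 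To reach general $k$ intrinsically I would pass to the ground state substitution $u_i=u_1 v_i$, which recasts the problem for the drift Laplacian $L=u_1^{-2}\operatorname{div}(u_1^2\nabla\,\cdot\,)$ on the probability space $(\Omega,u_1^2\,dx)$: there the $v_i$ are orthonormal with $v_1\equiv\text{const}$, the quantity $\|\phi_\alpha\|^2$ becomes the weighted squared distance of $x_\alpha v_k$ to $\operatorname{span}\{v_1,\dots,v_k\}$, and the antisymmetric fields $D_\alpha f=u_1^{-1}\partial_\alpha(u_1 f)$ obey the canonical relations $[D_\alpha,x_\beta]=\delta_{\alpha\beta}$, $D_\alpha^\ast=-D_\alpha$, together with $\sum_\alpha\|D_\alpha v_1\|^2=\lambda_1$. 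The identity $\sum_\alpha\langle x_\alpha v_k,D_\alpha v_k\rangle=-\tfrac n2$ is precisely the trace of $[D_\alpha,x_\alpha]=1$ in the state $v_k$, and the task is to turn it, against the small-energy direction carried by the ground state (whose momentum energy is only $\lambda_1$), into the stated bound.

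I expect this conversion to be the main obstacle, because the two obvious pairings fail in opposite ways: pairing $\phi_\alpha$ with the orthogonal part of $\partial_\alpha u_k$ is clean but only sees the scale $\lambda_k$, reproducing the Payne--P\'olya--Weinberger-type estimate $\sum_\alpha\|\phi_\alpha\|^2\ge n^2/(4\lambda_k)$, while pairing with the low-frequency field $\partial_\alpha u_1$ has the correct norm $\lambda_1$ but almost no overlap with the high-frequency $\phi_\alpha$. Producing a comparison direction that keeps the norm of order $\lambda_1$ yet retains a definite overlap, uniformly in $k$, is exactly what the new method must achieve.

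Once the lower bound $\sum_\alpha\|\phi_\alpha\|^2\ge n/(4\lambda_1)$ is established, the theorem follows immediately. Combining it with the displayed inequality $(\lambda_{k+1}-\lambda_k)^2\sum_\alpha\|\phi_\alpha\|^2\le 4\lambda_k$ gives $(\lambda_{k+1}-\lambda_k)^2\le \tfrac{16}{n}\lambda_1\lambda_k$, and inserting the Cheng--Yang estimate \eqref{diffyong} in the form $\lambda_k\le\lambda_{k+1}\le C_0(n)k^{2/n}\lambda_1$ yields
\begin{equation*}
\lambda_{k+1}-\lambda_k\le \frac{4}{\sqrt n}\sqrt{\lambda_1\lambda_k}\le 4\lambda_1\sqrt{\frac{C_0(n)}{n}}\,k^{\frac1n},
\end{equation*}
which is exactly \eqref{U-inequ1} with the stated constant $C_{n,\Omega}=4\lambda_1\sqrt{C_0(n)/n}$.
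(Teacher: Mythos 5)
Your reduction to the inequality $(\lambda_{k+1}-\lambda_k)^2\sum_{\alpha}\|\phi_\alpha\|^2\le 4\lambda_k$ is standard and correct, and you are right that everything then hinges on the lower bound $\sum_{\alpha}\|\phi_\alpha\|^2\ge n/(4\lambda_1)$. But you do not prove that bound: you identify it as ``the main obstacle'' and describe what a proof ``must achieve'' without supplying one, and it is not a consequence of anything you wrote. As it stands, the only pairing you actually justify (testing $\phi_\alpha$ against $\partial_\alpha u_k$) yields the classical Payne--P\'olya--Weinberger-type estimate $\sum_\alpha\|\phi_\alpha\|^2\ge n^2/(4\lambda_k)$, which gives a gap bound of order $\sqrt{\lambda_k\lambda_k}\sim k^{2/n}$ rather than $k^{1/n}$. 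So the proposal is incomplete at exactly the point where the new idea is required, and your own discussion of why the ``two obvious pairings fail in opposite ways'' is an accurate diagnosis of why the gap cannot be closed by a routine modification.

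The paper sidesteps this difficulty by never leaving the ground state: all integrals are taken against $u_1$ (the index $i$ in Lemma \ref{gapkeylemma} is eventually set to $1$), so the quantity $\int_{\Omega}|\nabla u_1|^2=\lambda_1$ enters automatically, and the information about the $k$-th level is extracted instead from a refined Rayleigh--Ritz argument involving the \emph{pair} $(\lambda_{k+1},\lambda_{k+2})$. Concretely, for the trial function $gu_1-\sum_{j\le k}a_{1j}u_j$ with $g=e^{\sqrt{-1}\,\alpha x_l}$, a two-parameter Lagrange-multiplier optimization shows that the extremal distribution of the coefficients $a_{1j}$, $j>k$, concentrates on exactly two eigenvalues, which the constraints force to be $\lambda_{k+1}$ and $\lambda_{k+2}$; this yields $\bigl(\sqrt{\lambda_{k+2}-\lambda_1}-\sqrt{\lambda_{k+1}-\lambda_1}\bigr)^2\le 4\lambda_1/n$ after summing over the coordinate functions, and multiplying by the conjugate factor $\bigl(\sqrt{\lambda_{k+2}-\lambda_1}+\sqrt{\lambda_{k+1}-\lambda_1}\bigr)^2\le 4\lambda_{k+2}$ together with the Cheng--Yang bound $\lambda_{k+2}\le C_0(n)(k+1)^{2/n}\lambda_1$ gives the stated constant. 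In short, the paper converts the problem into an estimate for the difference of square roots of eigenvalue differences measured in the ground state, rather than into a lower bound for the distance of $x_\alpha u_k$ from $\mathrm{span}\{u_1,\dots,u_k\}$; to complete your route you would have to prove your uncertainty-principle inequality uniformly in $k$, which remains entirely open in your write-up.
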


We think our method of proving Theorem \ref{MainThm1} will be valid for the case of $n$-dimensional complete Riemannian manifold. Here, we give some examples directly.

\begin{cor}\label{gapexample1}
Let $\Omega \subset \Bbb H^{n}(-1) $ be a bounded domain in hyperbolic space
$\Bbb H^{n}(-1)$,  and $\ld_{k}$ be the
$k^{th}$ eigenvalue of  the Dirichlet eigenvalue problem
(\ref{DLRM}). Then we have
\begin{equation}\label{U-inequ3}
\ld_{k+1}-\ld_{k}\leq C_{n,\Omega}k^{\frac{1}{n}},
\end{equation}
where $C_{n,\Omega}$ depends on $\Omega$ and the dimension $n$, given by
\begin{equation}\label{Const1}
  C_{n,\Omega}=4\left[C_{0}(n)\left(\ld_{1}-\frac{(n-1)^2}{4}\right)\Big(\ld_{1}
+\frac{n^2}{4}H_{0}^2\Big)\right]^{\frac{1}{2}},
\end{equation}
$C_{0}(n) $ and  $H_{0}^2$ are the same as the ones  in (\ref{chenchenganhan}).
\end{cor}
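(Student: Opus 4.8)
The plan is to reproduce the proof of Theorem \ref{MainThm1} on a bounded domain of $\Bbb{H}^n(-1)$, replacing each flat ingredient by its curved counterpart. Two geometric facts make the transfer possible. First, by Nash's theorem $\Bbb{H}^n(-1)$ admits an isometric immersion into some Euclidean $\Bbb{R}^N$, so the extrinsic inequality (\ref{chenchengwaiyun}) and the recursive estimate (\ref{chenchenganhan}) are at our disposal; they take over the roles played in the flat case by Yang's first inequality (\ref{CY41}) and by (\ref{diffyong}), with each $\ld_i$ promoted to $\ld_i+\frac{n^2}{4}H_0^2$. Second, since $\Bbb{H}^n(-1)$ has constant curvature $-1$, McKean's inequality together with strict domain monotonicity gives $\ld_1>\frac{(n-1)^2}{4}$ for every bounded $\Omega\subset\Bbb{H}^n(-1)$; this is exactly what makes the constant (\ref{Const1}) real and positive, and it is the source of the factor $\ld_1-\frac{(n-1)^2}{4}$.

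First I would extract the engine of Theorem \ref{MainThm1}: the reduction of the gap to a single estimate of the form $(\ld_{k+1}-\ld_{k})^2\le \mr{const}\cdot A\cdot B$, where $A$ is a \emph{bounded} first-eigenvalue factor and $B$ a \emph{growing} factor comparable to $\ld_{k+1}$. It is the appearance of $\ld_1$ (rather than the mean $\frac1k\sum_{i\le k}\ld_i$) in $A$ that lowers the order from $k^{\frac{2}{n}}$ down to $k^{\frac1n}$, and this is forced by invoking the variational characterisation $\into|\n\phi|^2\ge\ld_1\into\phi^2$ at the right stage of the test-function estimate. On $\Bbb{H}^n(-1)$ the analogous computation is available through (\ref{chenchengwaiyun}); the difference is that the intrinsic spectral bottom $\frac{(n-1)^2}{4}$ of hyperbolic space enters $A$ as a negative shift, while the immersion curvature enters $B$ as the positive shift $\frac{n^2}{4}H_0^2$. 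I therefore expect the core estimate to read
\begin{equation*}
(\ld_{k+1}-\ld_{k})^2\le 16\lf(\ld_1-\frac{(n-1)^2}{4}\rt)\lf(\ld_{k+1}+\frac{n^2}{4}H_0^2\rt).
\end{equation*}

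Granting this core estimate, the corollary follows at once: feeding the recursive bound (\ref{chenchenganhan}) into the second factor and taking square roots (both sides being nonnegative, since $\ld_1>\frac{(n-1)^2}{4}$) gives
\begin{equation*}
\ld_{k+1}-\ld_{k}\le 4\lf[C_0(n)\lf(\ld_1-\frac{(n-1)^2}{4}\rt)\lf(\ld_1+\frac{n^2}{4}H_0^2\rt)\rt]^{\frac12}k^{\frac1n},
\end{equation*}
which is precisely (\ref{U-inequ3}) with the constant (\ref{Const1}).

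I expect the main obstacle to lie entirely in the core estimate, and specifically in the bookkeeping that forces the two distinct geometric mechanisms to cooperate. The extrinsic term $+\frac{n^2}{4}H_0^2$, coming from the immersion into $\Bbb{R}^N$, and the intrinsic term $-\frac{(n-1)^2}{4}$, coming from the curvature $-1$, must land in the correct factor with the correct sign and constant, and one must confirm that the curvature correction enters so that $A=\ld_1-\frac{(n-1)^2}{4}$ is the positive quantity rather than something that could change sign. The subtler point, which is what separates the optimal order $k^{\frac1n}$ from the order $k^{\frac{3}{2n}}$ produced by (\ref{Yangdiff1}) and (\ref{chenchengwaiyundiff}), is to guarantee that it is genuinely $\ld_1$, and not an average of $\ld_1,\dots,\ld_k$, that survives in $A$; this pins down exactly where the variational principle for $\ld_1$ must be invoked in the test-function computation, and verifying that it does so cleanly on $\Bbb{H}^n(-1)$ is the crux.
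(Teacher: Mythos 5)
Your high-level architecture matches the paper's: the gap is squeezed into a core estimate of the form $(\ld_{k+2}-\ld_{k+1})^2\le 16\,A\,\ld_{k+2}$ with $A$ a bounded first-eigenvalue quantity, and then $\ld_{k+2}$ is converted to $C_0(n)\bigl(\ld_1+\frac{n^2}{4}H_0^2\bigr)(k+1)^{\frac{2}{n}}$ via the extrinsic recursive bound (\ref{chenchenganhan}); your final assembly step is exactly the paper's. But the proposal leaves the core estimate unproved, and the mechanism you offer for it is not the one that works. The factor $\ld_1-\frac{(n-1)^2}{4}$ does not come from McKean's inequality, nor from invoking the variational characterisation of $\ld_1$ inside the extrinsic inequality (\ref{chenchengwaiyun}); it comes from applying Corollary \ref{KeyLem3} (the $|\n f|=1$ specialisation of the key Lemma \ref{gapkeylemma}) to a concrete test function. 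In the upper half-space model the paper takes $f=r=\log x_n$, a Busemann-type function with $|\n r|=1$ and $\Delta r=-(n-1)$ constant; then in (\ref{Key-Inq2}) the term $-\frac14\into(\Delta r)^2u_1^2$ equals $-\frac{(n-1)^2}{4}$, the term $\into(\n(\Delta r)\cdot\n r)u_1^2$ vanishes, and $\into(\n r\cdot\n u_1)^2\le\into|\n u_1|^2=\ld_1$ supplies the $\ld_1$; this is precisely what produces $A=\ld_1-\frac{(n-1)^2}{4}$. McKean's inequality is a byproduct of this computation (nonnegativity of $\into\bigl(2\n r\cdot\n u_1+u_1\Delta r\bigr)^2$ forces $4\ld_1\ge(n-1)^2$), not an input to it. Without identifying such an $f$ with unit gradient and constant (or controlled) Laplacian, the core estimate you state is exactly the open part of the argument, as you yourself acknowledge.

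A secondary inaccuracy: (\ref{chenchengwaiyun}) plays no role in producing the core estimate; its only use, through (\ref{chenchenganhan}), is at the very last step. Correspondingly, the second factor in the paper's core estimate is just $\ld_{k+2}$, not $\ld_{k+2}+\frac{n^2}{4}H_0^2$; your version is weaker but would still suffice. The paper also estimates $\ld_{k+2}-\ld_{k+1}$ for arbitrary $k$ and then relabels, rather than bounding $\ld_{k+1}-\ld_k$ directly, which is a harmless difference.
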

In fact, by the comparison theorem for the distance function in Riemannian manifold, we have
\begin{cor}\label{MainThm2}
Let $\Omega\subset M$ be  a bounded domain of  an $n$-dimensional ($n\geq3$) simply connected complete noncompact Riemannian manifold $M$ with
sectional curvature $Sec$  satisfying
$$
 -a^2\leq Sec\leq -b^2,
$$
where $a\geq b\geq0$ are constants. Let $\ld_{k}$ be the $k^{th}$
eigenvalue of  the eigenvalue problem (\ref{DLRM}). Then we have
\begin{equation}\label{U-inequ2}
\ld_{k+1}-\ld_{k}\leq C_{n,\Omega}
k^{\frac{1}{n}}
\end{equation}
where $C_{n,\Omega}$ depends on $\Omega$ and
the dimension $n$, given by
\begin{equation}\label{Const2}
  C_{n,\Omega}=4\left[C_{0}(n)\left(\ld_{1}-\frac{(n-1)^2}{4}b^2+\frac{a^2-b^2}{4}\right)
\left(\ld_{1}+\frac{n^2}{4}H_{0}^2\right)\right]^{\frac12},
\end{equation}
$C_{0}(n)$ and $H_{0}^2$ are the same as the ones in (\ref{chenchenganhan}).
\end{cor}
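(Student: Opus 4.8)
The plan is to carry out the proof of Theorem \ref{MainThm1} in the isometrically immersed setting and to insert the two--sided curvature pinching through the comparison theorem; taking $a=b=1$ will then recover Corollary \ref{gapexample1}. By Nash's theorem $M$ admits an isometric immersion $\psi:M\to\Bbb{R}^{N}$, so the Dirichlet problem (\ref{DLRM}) on $\Omega$ can be attacked with the extrinsic inequalities of Cheng and the first author, namely the Yang--type inequality (\ref{chenchengwaiyun}) and the recursive growth estimate (\ref{chenchenganhan}), both carrying the term $\frac{n^2}{4}H_0^2$ with $H_0^2$ as in (\ref{jinruchangshu}). The goal is a core gap inequality
\[ \ld_{k+1}-\ld_k\le 4\Big[\big(\ld_1-\tfrac{(n-1)^2}{4}b^2+\tfrac{a^2-b^2}{4}\big)\big(\ld_{k+1}+\tfrac{n^2}{4}H_0^2\big)\Big]^{\frac12}; \]
substituting (\ref{chenchenganhan}) in the form $\ld_{k+1}+\frac{n^2}{4}H_0^2\le C_0(n)k^{2/n}(\ld_1+\frac{n^2}{4}H_0^2)$ then produces exactly (\ref{U-inequ2}) with the constant (\ref{Const2}).

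The heart of the argument is the passage from the order $k^{2/n}$, which (\ref{chenchengwaiyundiff}) gives after discarding its variance term, to the order $k^{1/n}$ demanded by the core inequality. I would exploit the cancellation, forced by Weyl's law, inside the discriminant of (\ref{chenchengwaiyundiff}): writing $\bar\ld=\tfrac1k\sum_i\ld_i$ and $\overline{\ld^2}=\tfrac1k\sum_i\ld_i^2$, the bracket rearranges so that the leading contributions of $(1+\tfrac2n)^2\bar\ld^2$ and $(1+\tfrac4n)\overline{\ld^2}$ cancel and what remains is of size $O\big(\ld_{k+1}+\tfrac{n^2}{4}H_0^2\big)$ rather than its square. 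This is precisely what yields the factorisation into a factor controlled by $\ld_1$ and a factor $\ld_{k+1}+\tfrac{n^2}{4}H_0^2$, hence $\sqrt{\ld_{k+1}}\sim k^{1/n}$ after (\ref{chenchenganhan}).

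The curvature--corrected first factor is supplied by the comparison theorem for the distance function. Under $-a^2\le Sec\le -b^2$, Hessian comparison for the distance $r$ to a fixed point gives $(n-1)\,b\coth(br)\le \Delta r\le (n-1)\,a\coth(ar)$, pinching $\Omega$ between the space forms $\Bbb H^n(-a^2)$ and $\Bbb H^n(-b^2)$. Feeding these bounds into the intrinsic test functions that control the first factor of the core inequality --- in place of the exact identity $\Delta r=(n-1)\coth r$ valid on $\Bbb H^n(-1)$ in the proof of Corollary \ref{gapexample1} --- replaces the Euclidean factor $\ld_1$ of Theorem \ref{MainThm1} by $\ld_1-\tfrac{(n-1)^2}{4}b^2+\tfrac{a^2-b^2}{4}$: the McKean--type term $\tfrac{(n-1)^2}{4}b^2$ comes from $b\coth(br)\ge b$, while $\tfrac{a^2-b^2}{4}$ records the slack between the upper and lower pinching constants. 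Setting $a=b=1$ collapses this factor to $\ld_1-\tfrac{(n-1)^2}{4}$ and reproduces (\ref{Const1}).

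The main obstacle is the upgrade in the second step: establishing uniformly in $k$ that the discriminant in (\ref{chenchengwaiyundiff}) is only of order $\ld_{k+1}+\tfrac{n^2}{4}H_0^2$. A single use of (\ref{chenchengwaiyun}) cannot give this --- it is consistent with a gap of order $\ld_{k+1}$, as the fully clustered configuration shows --- so the Yang--type inequality must be coupled to the recursion underlying (\ref{diffyong})/(\ref{chenchenganhan}), which forces the low eigenvalues to be small and thereby makes the spectrum spread. A secondary point is to verify that each curvature--corrected factor remains nonnegative, so that the square root in (\ref{Const2}) is real; the hypotheses $n\ge 3$ and $a\ge b\ge 0$, together with the McKean bound, are what secure this.
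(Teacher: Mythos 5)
Your overall architecture is right at the top level: you correctly identify that the final bound comes from multiplying a curvature-corrected first factor (obtained from Hessian comparison for the distance function, with the $-\frac{(n-1)^2}{4}b^2$ term from $b\coth(b\rho)\ge b$ and the $\frac{a^2-b^2}{4}$ term from the pinching slack) by $\sqrt{\ld_{k+2}}$, and then converting $\sqrt{\ld_{k+2}}$ into $C_0(n)^{1/2}(\ld_1+\frac{n^2}{4}H_0^2)^{1/2}(k+1)^{1/n}$ via (\ref{chenchenganhan}). This is indeed how the paper concludes.

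However, there is a genuine gap at what you yourself call ``the heart of the argument'': the passage to the core gap inequality of the form $\ld_{k+2}-\ld_{k+1}\le 4(\cdots)^{1/2}\sqrt{\ld_{k+2}}$. You propose to extract this from a cancellation inside the discriminant of (\ref{chenchengwaiyundiff}), ``forced by Weyl's law.'' This cannot work. First, Weyl's law is an asymptotic statement about the true spectrum of a specific domain; it cannot be fed back into a universal inequality such as (\ref{chenchengwaiyun}) to improve its order uniformly in $k$ --- the variance term $\frac{1}{k}\sum_i(\ld_i-\bar\ld)^2$ in the discriminant is not controlled from below by anything universal, and the paper itself states explicitly that (\ref{Yangdiff1}) and (\ref{chenchengwaiyundiff}) only yield the order $k^{3/(2n)}$. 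You acknowledge this obstacle but resolve it only by the vague suggestion of ``coupling the Yang-type inequality to the recursion,'' which is not an argument. The paper's actual mechanism is entirely different and is the main novelty: a new test-function lemma (Lemma \ref{gapkeylemma}), proved with $g=\exp(\sqrt{-1}\,\alpha f)$, a Rayleigh--Ritz/Cauchy--Schwarz estimate on the tail coefficients $a_{ij}$ for $j\ge k+1$, and a Lagrange-multiplier optimization that isolates the two levels $\ld_{k+1},\ld_{k+2}$. This yields Corollary \ref{KeyLem3}, i.e.
\begin{equation*}
\ld_{k+2}-\ld_{k+1}\leq 4\left(\ld_i-\frac14\into (\Delta f)^2u_i^2-\frac12\into(\n(\Delta f)\cdot\n f)\, u_i^2\right)^{\frac12}\sqrt{\ld_{k+2}}
\end{equation*}
for any $f$ with $|\n f|=1$, and it is into this inequality (with $f=\rho$, $i=1$) that the Bochner identity $\n\rho\cdot\n(\Delta\rho)=-|\mathrm{Hess}\,\rho|^2-\mathrm{Ric}(\n\rho,\n\rho)$ and the Hessian comparison bounds are inserted to produce the factor $\ld_1-\frac{(n-1)^2}{4}b^2+\frac{a^2-b^2}{4}$ (this step also uses the algebraic estimate $2|\mathrm{Hess}\,\rho|^2-(\Delta\rho)^2\le H^2-(n-2)^2h^2$ on the Hessian eigenvalues, which is where the hypothesis $n\ge3$ enters --- not merely in securing nonnegativity as you suggest). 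Without some version of this key lemma your proof does not close.
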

\begin{rem}
 Under the same assumption of Corollary \ref{MainThm2}, Lu and the first two authors \cite{CZL} obtained
\begin{equation*}
\sum_{i=1}^{k}(\lambda_{k+1}-\lambda_{i})^2
\leq  4\sum_{i=1}^{k}(\lambda_{k+1}-\lambda_{i})\left(\ld_{i}-\frac{(n-1)^2}{4}b^2+\frac{n-1}{2}(a^2-b^2)\right).
\end{equation*}
Therefore, one can get
\begin{equation*}
\ba
  \ld_{k+1}-\ld_{k}\leq 2&\left[ \left(\frac{2}{k}\sum_{i=1}^{k}\ld_i-\frac{(n-1)^2}{4}b^2+\frac{n-1}{2}(a^2-b^2)\right)^2\right.\\
  &\left.\quad-\frac 5{k}\sum_{i=1}^{k}\left(\ld_i-
  \frac{1}{k}\sum_{i=1}^{k}\ld_j\right)^2  \right]^{\frac12}.
\ea\end{equation*}
\end{rem}

%------------------------------------------------------------------------------------------------------------------------------------------------
\section{  proofs of main results}
In this section, we will give the proof of Theorem \ref{MainThm1}.
In order to prove our main results, we need the following key lemma
\begin{lem}\label{gapkeylemma}
For the Dirichlet eigenvalue problem (\ref{DLRM}),  let $u_{k}$ be the orthonormal eigenfunction corresponding to the $k^{th}$ eigenvalue $\lambda_{k} $,
i.e.
\begin{equation*}
\left\{
\aligned
\Delta u_k=&-\ld_{k} u_k,\; \text{in} \; \Omega,\\
u_k=&0, \;\;\;\;\;\;\;\; \;\;\;\text{on}\;\partial \Omega,\\
\into u_{i}u_{j} & =  \delta_{ij}.
\endaligned
\right.
\end{equation*}

Then for any complex value function
$
g\in C^{3}(\Omega)\cap C^{2}(\overline{\Omega})
$
and
$
k,\,i\in \Bbb Z^{+},\, k>i\geq1,
$
we have
\begin{equation}\label{mainformula}
\aligned
\Big((\ld_{k+1}-\ld_{i})+(\ld_{k+2}-\ld_{i})\Big)\into |\nabla g|^2u_{i}^2
\leq &\into\Big|2\nabla g\cdot\nabla u_{i}+ u_{i}\Delta g \Big|^2\\
&+(\ld_{k+1}-\ld_{i})(\ld_{k+2}-\ld_{i})\into |gu_{i}|^2.
\endaligned
\end{equation}
\end{lem}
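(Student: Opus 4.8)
The plan is to expand $gu_i$ in the orthonormal eigenbasis $\{u_j\}$ and to reduce the asserted inequality to a single series that is manifestly nonnegative term by term. Write $a_{ij}=\into g\,u_iu_j$ for the (possibly complex) Fourier coefficients of $gu_i$, so that by Parseval's identity $\into|gu_i|^2=\sum_j|a_{ij}|^2$. Next I would introduce $w:=2\n g\cdot\n u_i+u_i\Delta g$ and record the algebraic identity $w=\Delta(gu_i)+\ld_i gu_i$. Using Green's second identity together with the fact that both $gu_i$ and $u_j$ vanish on $\partial\Om$, the boundary terms drop out and one gets $\into\Delta(gu_i)\,u_j=\into gu_i\,\Delta u_j=-\ld_j a_{ij}$; hence $\into w\,u_j=(\ld_i-\ld_j)a_{ij}$, and Parseval again yields $\into|w|^2=\sum_j(\ld_j-\ld_i)^2|a_{ij}|^2$. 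The regularity hypothesis $g\in C^3(\Om)\cap C^2(\overline{\Om})$ is exactly what guarantees $w\in L^2(\Om)$, so that all these expansions are legitimate.

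The one genuine computation is to identify $\into|\n g|^2u_i^2$ with a weighted sum of the $|a_{ij}|^2$. I would evaluate $\into w\,\overline{gu_i}$ in two ways. On the one hand, Parseval gives $\into w\,\overline{gu_i}=\sum_j(\ld_i-\ld_j)|a_{ij}|^2$. On the other hand, since $\into w\,\overline{gu_i}=-\into|\n(gu_i)|^2+\ld_i\into|gu_i|^2$, I would integrate by parts: the cross term $2\,\mathrm{Re}\into u_i\overline g\,\n g\cdot\n u_i=\tfrac12\into\n|g|^2\cdot\n(u_i^2)=-\tfrac12\into|g|^2\Delta(u_i^2)$, and using $\Delta(u_i^2)=2|\n u_i|^2-2\ld_i u_i^2$ it combines with $\into|g|^2|\n u_i|^2$ so that $\into|\n(gu_i)|^2=\into|\n g|^2u_i^2+\ld_i\into|gu_i|^2$. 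Hence $\into w\,\overline{gu_i}=-\into|\n g|^2u_i^2$, and comparing the two evaluations gives the clean identity $\into|\n g|^2u_i^2=\sum_j(\ld_j-\ld_i)|a_{ij}|^2$. This integration-by-parts step, with the care needed to make the boundary terms vanish and the complex conjugates behave, together with the justification of $L^2$-convergence for $w$, is where I expect essentially all of the difficulty to sit; everything else is bookkeeping.

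With the three expansions of $\into|gu_i|^2$, $\into|w|^2$, and $\into|\n g|^2u_i^2$ in hand, the inequality becomes purely algebraic. The decisive observation is that $\ld_{k+1}$ and $\ld_{k+2}$ are consecutive, so no eigenvalue lies strictly between them; consequently $(\ld_j-\ld_{k+1})(\ld_j-\ld_{k+2})\ge0$ for every $j\ge1$, since both factors are $\le0$ when $\ld_j\le\ld_{k+1}$ and both are $\ge0$ when $\ld_j\ge\ld_{k+2}$. Therefore $\sum_j(\ld_j-\ld_{k+1})(\ld_j-\ld_{k+2})|a_{ij}|^2\ge0$ term by term. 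Expanding each factor about $\ld_i$ via $\ld_j-\ld_{k+1}=(\ld_j-\ld_i)-(\ld_{k+1}-\ld_i)$ and likewise for $\ld_{k+2}$, this series splits exactly into $\into|w|^2-\big[(\ld_{k+1}-\ld_i)+(\ld_{k+2}-\ld_i)\big]\into|\n g|^2u_i^2+(\ld_{k+1}-\ld_i)(\ld_{k+2}-\ld_i)\into|gu_i|^2$, and its nonnegativity is precisely (\ref{mainformula}). I note that no Rayleigh--Ritz variational bound is invoked, which is what lets the argument run for arbitrary $i$ without the constraint $i\le k$ playing any role in the proof itself.
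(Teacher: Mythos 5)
Your proof is correct, and it reaches (\ref{mainformula}) by a genuinely different and cleaner route than the paper's. The shared core is the three Parseval-type identities $\into|gu_i|^2=\sum_j|a_{ij}|^2$, $\into|2\n g\cdot\n u_i+u_i\Delta g|^2=\sum_j(\ld_j-\ld_i)^2|a_{ij}|^2$ and $\into|\n g|^2u_i^2=\sum_j(\ld_j-\ld_i)|a_{ij}|^2$ (in the paper these appear through $b_{ij}=\tfrac12(\ld_i-\ld_j)a_{ij}$, and its $\widetilde A(i)$, $\widetilde B(i)$, $\widetilde C(i)$ are precisely the tails of your series over $j\ge k+1$). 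From there the paper argues variationally: it applies the Rayleigh--Ritz inequality to $\varphi_i=gu_i-\sum_{j\le k}a_{ij}u_j$ to get $(\ld_{k+1}-\ld_i)\widetilde B(i)\le\widetilde C(i)$, Cauchy--Schwarz to get $\widetilde C(i)\le\sqrt{\widetilde A(i)\widetilde B(i)}$, and then a Lagrange-multiplier optimization over all coefficient sequences with prescribed $\widetilde A(i)$, $\widetilde B(i)$ to conclude that the extremal configuration concentrates on $j=k+1$ and $j=k+2$, giving (\ref{chubujieguo}). Your single observation that $(\ld_j-\ld_{k+1})(\ld_j-\ld_{k+2})\ge0$ for every $j$ --- both factors are $\le0$ when $j\le k+1$ and $\ge0$ when $j\ge k+2$, since no eigenvalue lies strictly between two consecutive ones --- replaces that entire optimization by a term-by-term positivity check, and it also absorbs, with the correct sign, the sum $\sum_{j=1}^{k}(\ld_{k+1}-\ld_j)(\ld_{k+2}-\ld_j)|a_{ij}|^{2}$ that the paper simply discards at its last step. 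What your route buys is robustness: it avoids the delicate points of the paper's extremal argument (attainment of the extremum, the identification $r=k+2$, $l=k+1$), it does not need $\widetilde B(i)>0$, and, as you note, it never invokes Rayleigh--Ritz, so the hypothesis $k>i$ is superfluous. The only item worth making explicit is the absolute convergence that justifies rearranging $\sum_j(\ld_j-\ld_{k+1})(\ld_j-\ld_{k+2})|a_{ij}|^2$ into your three series; this follows from $\sum_j\ld_j^2|a_{ij}|^2<\infty$ (your $w\in L^2$) together with $\sum_j|a_{ij}|^2<\infty$, exactly as you indicate.
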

\begin{proof}
For $i<k$, define
$$
\left\{
\aligned
a_{ij} =& \into g u_{i}u_{j},   \\
b_{ij}=&\into \left(\nabla g\cdot\nabla u_{i}+\frac{1}{2}u_{i}\Delta g \right)u_{j},\\
\varphi_{i}=&gu_{i}-\sum_{j=1}^{k}a_{ij}u_{j},
\endaligned
\right.
$$
where
$
\nabla
$
denotes the gradient operator.
Obviously,
\begin{equation}\label{22222}
a_{ij}=a_{ji},\;\;\into \varphi_{i}u_{j}=0,\;\;\mbox{for}\;\; j=1,2,\cdots,k.
\end{equation}
Then, from the Stokes' theorem, we get
\begin{equation*}\label{ }
\ld_{j}a_{ij}=\into gu_{i}(-\Delta u_{j})=-\into (u_{i}\Delta g+g\Delta u_{i}+2\nabla g\cdot\nabla u_{i})u_{j},
\end{equation*}
i.e.
\begin{equation}\label{33333}
2b_{ij}= (\ld_{i}-\ld_{j})a_{ij}.
\end{equation}
From the Stokes' theorem, we have
\begin{equation*}\label{}
-2\into gu_{i}\n \overline{g}\cdot\n u_{i}=-\into g\n \overline{g}\cdot\n u_{i}^2
=\into (\n g\cdot\n \overline{g}+g\Delta \overline{g})u_{i}^2.
\end{equation*}
By the definition of $a_{ij}$, $b_{ij}$ and (\ref{33333}), we obtain
\begin{equation*}\label{}
\into |\n g|^2u_{i}^2=-2\into gu_{i}\left(\n \overline{g}\cdot \n u_{i}+\frac{1}{2}u_{i}\Delta \overline{g}\right)
=-2\sum_{j=1}^{\infty}a_{ij}\overline{b_{ij}}=\sum_{j=1}^{\infty}(\ld_{j}-\ld_{i})|a_{ij}|^2.
\end{equation*}
Similarly, from the Stokes' theorem,
(\ref{22222})
and
(\ref{33333}),  we have
\begin{equation}\label{11111}
\aligned
\into |\nabla \varphi_{i}|^2=&\into \overline{\varphi_{i}}\left(-\Delta \varphi_{i}\right)\\
=&\into \overline{\varphi_{i}}\left(-\Delta (gu_{i})+ \sum\limits_{j=1}^{k}a_{ij}\Delta u_{j} \right)\\
=&\into \overline{\varphi_{i}}\left(-2\nabla g\cdot \nabla u_{i}-\Delta g u_{i}-g\Delta u_{i} \right)\\
=&-\into \overline{\varphi_{i}}\left(2\nabla g\cdot \nabla u_{i}+\Delta g u_{i}-\ld_ig u_{i} \right)\\
=&-2\sum\limits_{j=k+1}^{\infty}\overline{a_{ij}}b_{ij}+\ld_{i}\sum\limits_{j=k+1}^{\infty}|a_{ij}|^2\\
=&\sum\limits_{j=k+1}^{\infty}(\ld_{j}-\ld_{i})|a_{ij}|^2+\ld_{i}\sum\limits_{j=k+1}^{\infty}|a_{ij}|^2.
\endaligned
\end{equation}

From the Rayleigh-Ritz inequality (cf. \cite{chavel}) and
(\ref{11111}), we have
\begin{equation*}\label{ }
\ld_{k+1}\leq\frac{\into |\nabla \varphi_{i}|^2}{\into |\varphi_{i}|^2}
=\frac{\sum_{j=k+1}^{\infty}(\ld_{j}-\ld_{i})|a_{ij}|^2}{\sum_{j=k+1}^{\infty}|a_{ij}|^2}+\ld_{i},
\end{equation*}
i.e.
\begin{equation}\label{fujidajixiao}
(\ld_{k+1}-\ld_{i})\sum_{j=k+1}^{\infty}|a_{ij}|^2\leq  \sum_{j=k+1}^{\infty}(\ld_{j}-\ld_{i})|a_{ij}|^2 .
\end{equation}
From the  Cauchy-Schwarz inequality, we have
\begin{equation*}\label{ }
\left(\sum_{j=k+1}^{\infty}(\ld_{j}-\ld_{i})|a_{ij}|^2\right)^2 \leq
\sum_{j=k+1}^{\infty}(\ld_{j}-\ld_{i})^2|a_{ij}|^2
\sum_{j=k+1}^{\infty} |a_{ij}|^2,
\end{equation*}
i.e.
\begin{equation}\label{fuschwarzinequality}
\begin{aligned}
&\left( \into |\nabla g|^2u_{i}^2-\sum_{j=1}^{k}(\ld_{j}-\ld_{i})|a_{ij}|^2\right)^2\\
\leq&\left(\into
|gu_{i}|^2-\sum_{j=1}^{k}|a_{ij}|^2\right)\left(\into\left|2\nabla
g\cdot\nabla u_{i}+u_{i}\Delta g \right|^2 -\sum_{j=
1}^{k}(\ld_{j}-\ld_{i})^2|a_{ij}|^2\right).
\end{aligned}
\end{equation}
Define
$$
\left\{
\aligned
\widetilde{B}(i)= &\into |gu_{i}|^2-\sum_{j=1}^{k}|a_{ij}|^2 =\sum_{j=k+1}^{\infty}|a_{ij}|^2>0,\\
\widetilde{A}(i)=&\into\left|2\nabla g\cdot\nabla u_{i}+ u_{i}\Delta g \right|^2-\sum_{j=1}^{k}(\ld_{j}-\ld_{i})^2|a_{ij}|^2
=\sum_{j=k+1}^{\infty}(\ld_{j}-\ld_{i})^2|a_{ij}|^2\geq0,\\
\widetilde{C}(i)=&\into |\n g|^2u_{i}^2-\sum_{j=1}^{k}(\ld_{j}-\ld_{i})|a_{ij}|^2
=\sum_{j=k+1}^{\infty}(\ld_{j}-\ld_{i})|a_{ij}|^2.
\endaligned
\right.
$$
Next, we will deduce the maxima of $\widetilde{C}(i)$ by using the Lagrange method of multipliers (cf.\cite{berger}).
For  any sequence $\{\phi_{ij}\}_{j=k+1}^\infty$ satisfying
\begin{equation*}
  \sum_{j=k+1}^\infty|\phi_{ij}|^2\leq \infty
\end{equation*}
we define the function,
\begin{equation*}\label{}
\aligned
\Phi(|\phi_{ij}|,\mu,\ld)
 =&\sum_{j=k+1}^{\infty}(\ld_{j}-\ld_{i})|\phi_{ij}|^2
 +\mu\left(\sum_{j=k+1}^{\infty}(\ld_{j}-\ld_{i})^2|\phi_{ij}|^2-\widetilde{A}(i)\right)\\
 &+\ld\left(\sum_{j=k+1}^{\infty}|\phi_{ij}|^2-\widetilde{B}(i)\right).
\endaligned
\end{equation*}
where $\mu, \ld$ are two real parameters.

Assume $\{a_{ij}\}_{j=k+1}^{\infty}$ is the extreme point of $\varphi$. Then for any $\{\psi_{ij}\}_{j=k+1}^{\infty}$
satisfying $\sum\limits_{j=k+1}^{\infty}|\psi_{ij}|^2<\infty$, from
$$
\left.\frac{ \mathrm{d}}{\mathrm{d}t }\right|_{t=0} \Phi(|a_{ij}|+t|\psi_{ij}|)=0
$$
we have
\be\label{jizhidian}
2\sum\limits_{j=k+1}^{\infty}|a_{ij}||\psi_{ij}|\left((\ld_{j}-\ld_{i})+\mu (\ld_{j}-\ld_{i})^2+\ld\right)=0.
\ee
Taking
$$
\psi_{ij}=
\left\{
\ba
1,&\;\;j=p,\\
0,&\;\;\mbox{otherwise},
\ea
\right.
$$
in (\ref{jizhidian}), we have
\be\label{jizhidianyong}
|a_{ip}|\left((\ld_{p}-\ld_{i})+\mu (\ld_{p}-\ld_{i})^2+\ld\right)=0,\;p=k+1,\cdots.
\ee
From
\begin{equation*}
\left\{
\begin{aligned}
&\frac{\partial \Phi}{\partial \mu}=0,\\
&\frac{\partial \Phi}{\partial \ld}=0,
\end{aligned}
\right.
\end{equation*}
we have the two constraint conditions
\begin{equation}\label{jizhidianyong2}
\left\{\begin{aligned}
&\sum_{j=k+1}^{\infty}(\ld_{j}-\ld_{i})^2|a_{ij}|^2=\widetilde{A}(i),\\
&\sum_{j=k+1}^{\infty}|a_{ij}|^2=\widetilde{B}(i).
\end{aligned}\right.
\end{equation}
Since there are two Lagrange multipliers and
$
\widetilde{B}(i)>0,
$
from (\ref{jizhidianyong}) and (\ref{jizhidianyong2}), there exist
$
r>l>k
$
such that
$
|a_{ir}|\cdot|a_{il}|\neq0,\;\ld_{r}>\ld_{l},
$
and
$
|a_{ij}|=0,\;j\neq r,l.
$
Hence, we have
\begin{equation}\label{haole}
\left\{\begin{aligned}
&m_{r}(\ld_{r}-\ld_{i})^2|a_{ir}|^2+m_{l}(\ld_{l}-\ld_{i})^2|a_{il}|^2=\widetilde{A}(i),\\
&m_{r}|a_{ir}|^2+ m_{l}|a_{il}|^2=\widetilde{B}(i),
\end{aligned}\right.
\end{equation}
where $m_{r}, m_{l}$ are the multiplicity of the eigenvalues $\ld_{r}\,\mbox{and}\,\ld_{l},$
respectively.
From
(\ref{haole}),
we have
\begin{equation}\label{Equ-C}
\widetilde{C}(i)=\frac{\widetilde{A}(i)+(\ld_{r}-\ld_{i})(\ld_{l}-\ld_{i})\widetilde{B}(i)}{(\ld_{l}-\ld_{i})+(\ld_{r}-\ld_{i})}.
\end{equation}
From
(\ref{fuschwarzinequality}),
we have
\begin{equation}\label{tiaojian1}
\widetilde{C}(i)\leq\sqrt{\widetilde{A}(i)\widetilde{B}(i)}.
\end{equation}
By the definition of
$
\widetilde{A}(i)
$
and
$
\widetilde{B}(i),
$
we have
\begin{equation}\label{tiaojian2}
(\ld_{k+1}-\ld_{i})\leq \sqrt{\widetilde{A}(i)/\widetilde{B}(i)}.
\end{equation}
From the range of the function
$
\frac{\widetilde{A}(i)+(\ld_{r}-\ld_{i})(\ld_{l}-\ld_{i})\widetilde{B}(i)}{(\ld_{l}-\ld_{i})+(\ld_{r}-\ld_{i})},
$
(\ref{tiaojian1}),
we have $r=k+2.$
From
(\ref{tiaojian2}),
we have
$
l=k+1.
$
Therefore, we obtain
\begin{equation}\label{chubujieguo}
\widetilde{C}(i)\leq \frac{\widetilde{A}(i)+(\ld_{k+2}-\ld_{i})(\ld_{k+1}-\ld_{i})\widetilde{B}(i)}{(\ld_{k+2}-\ld_{i})+(\ld_{k+1}-\ld_{i})}.
\end{equation}
From
(\ref{chubujieguo}),
and the definition of
$\widetilde{A}(i),\;\widetilde{B}(i)$ and $\widetilde{C}(i),$
we have
\begin{equation}\label{chubu2}
\aligned
 &\left((\ld_{k+2}-\ld_{i})+(\ld_{k+1}-\ld_{i})\right)\into |\nabla g|^2u_{i}^2\\
\leq&\into\left|2\nabla g\cdot\nabla u_{i}+ u_{i}\Delta g \right|^2
+(\ld_{k+1}-\ld_{i})(\ld_{k+2}-\ld_{i})\into |gu_{i}|^2\\&-\sum_{j=1}^{k}(\ld_{k+1}-\ld_{j})(\ld_{k+2}-\ld_{j})|a_{ij}|^{2}\\
\leq&\into\left|2\nabla g\cdot\nabla u_{i}+ u_{i}\Delta g \right|^2
+(\ld_{k+1}-\ld_{i})(\ld_{k+2}-\ld_{i})\into|gu_{i}|^2
\endaligned
\end{equation}
which finishes the proof of Lemma \ref{gapkeylemma}.
\end{proof}

Based on Lemma \ref{gapkeylemma}, we have
\begin{cor}\label{yongzhegehao}
Under the assumption of Lemma \ref{gapkeylemma}, for any real value function $f\in C^{3}(\Omega)\cap C^{2}(\overline{\Omega}),$  we have
\be\label{zhegehao}
\ba
    &\left((\ld_{k+2}-\ld_{i})+(\ld_{k+1}-\ld_{i})\right)\into |\n f|^{2}u_{i}^{2}\\
\leq&2\sqrt{\left((\ld_{k+2}-\ld_{i})(\ld_{k+1}-\ld_{i})\right)\into |\n f|^{4}u^{2}}+\into \left(2\n f\cdot \n u_{i}+u_{i}\Delta f\right)^{2}.
\ea
\ee
\end{cor}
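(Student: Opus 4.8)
The plan is to apply Lemma \ref{gapkeylemma} not to $f$ directly, but to a unimodular complex test function manufactured from $f$ together with a free positive real parameter, and then to optimize. Concretely, for $\tau>0$ I would take
\[
g=e^{\sqrt{-1}\,\tau f}.
\]
Since $f\in C^{3}(\Omega)\cap C^{2}(\overline{\Omega})$ is real-valued, $g$ belongs to the same class and is therefore admissible in (\ref{mainformula}), while $|g|\equiv 1$. The point of forcing $|g|\equiv 1$ is that it will collapse the term $\into|gu_{i}|^{2}$ to a constant and so reveal the coefficient $(\ld_{k+1}-\ld_{i})(\ld_{k+2}-\ld_{i})$ in clean form.

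Next I would compute the three ingredients appearing in (\ref{mainformula}). From $\n g=\sqrt{-1}\,\tau(\n f)g$ one gets $|\n g|^{2}=\tau^{2}|\n f|^{2}$, and since $|g|\equiv1$ with $u_{i}$ real, $\into|gu_{i}|^{2}=\into u_{i}^{2}=1$. Differentiating once more, $\Delta g=\bigl(\sqrt{-1}\,\tau\Delta f-\tau^{2}|\n f|^{2}\bigr)g$, whence
\[
2\n g\cdot\n u_{i}+u_{i}\Delta g=g\Bigl[\sqrt{-1}\,\tau\bigl(2\n f\cdot\n u_{i}+u_{i}\Delta f\bigr)-\tau^{2}|\n f|^{2}u_{i}\Bigr].
\]
Separating real and imaginary parts (and using $|g|\equiv1$) yields $\bigl|2\n g\cdot\n u_{i}+u_{i}\Delta g\bigr|^{2}=\tau^{2}\bigl(2\n f\cdot\n u_{i}+u_{i}\Delta f\bigr)^{2}+\tau^{4}|\n f|^{4}u_{i}^{2}$.

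Substituting these into (\ref{mainformula}) and writing $A=(\ld_{k+1}-\ld_{i})+(\ld_{k+2}-\ld_{i})$ and $B=(\ld_{k+1}-\ld_{i})(\ld_{k+2}-\ld_{i})$, I arrive at
\[
A\,\tau^{2}\into|\n f|^{2}u_{i}^{2}\leq \tau^{2}\into\bigl(2\n f\cdot\n u_{i}+u_{i}\Delta f\bigr)^{2}+\tau^{4}\into|\n f|^{4}u_{i}^{2}+B.
\]
Dividing by $\tau^{2}>0$ isolates, on the right, the $\tau$-independent term $\into(2\n f\cdot\n u_{i}+u_{i}\Delta f)^{2}$ together with the pair $\tau^{2}\into|\n f|^{4}u_{i}^{2}+B\tau^{-2}$. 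Optimizing this last pair over $\tau$ by the arithmetic–geometric mean inequality produces the minimum $2\sqrt{B\into|\n f|^{4}u_{i}^{2}}$, attained at $\tau^{2}=\bigl(B/\into|\n f|^{4}u_{i}^{2}\bigr)^{1/2}$, which is precisely the right-hand side of (\ref{zhegehao}).

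The argument is essentially forced once the test function is chosen, so the only real obstacle is the bookkeeping: one must pick a modulus-one $g$ so that $\into|gu_{i}|^{2}$ reduces to $1$, and then correctly separate the real and imaginary components of $2\n g\cdot\n u_{i}+u_{i}\Delta g$ to get the clean sum $\tau^{2}(\cdots)^{2}+\tau^{4}|\n f|^{4}u_{i}^{2}$; the square root in (\ref{zhegehao}) is then dictated by the AM–GM optimization in $\tau$. The degenerate case $\n f\equiv0$ makes both sides of (\ref{zhegehao}) vanish and may be set aside.
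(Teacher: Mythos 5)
Your proposal is correct and follows essentially the same route as the paper: the authors also take $g=\exp(\sqrt{-1}\,\al f)$ in (\ref{mainformula}), observe that $|\n g|^2=\al^2|\n f|^2$, $\into|gu_i|^2=1$, and $|2\n g\cdot\n u_i+u_i\Delta g|^2=\al^2(2\n f\cdot\n u_i+u_i\Delta f)^2+\al^4|\n f|^4u_i^2$, then divide by $\al^2$ and optimize the remaining $\al^2\into|\n f|^4u_i^2+\al^{-2}(\ld_{k+1}-\ld_i)(\ld_{k+2}-\ld_i)$ term. Your explicit AM--GM optimization (with the optimal $\tau$) is exactly the step the paper attributes to ``the Cauchy--Schwarz inequality,'' so there is no substantive difference.
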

\begin{proof}
Taking $g=\exp(\sqrt{-1}\al f),\,\al\in\Bbb{R}\backslash \{0\}$ in (\ref{mainformula}), we have
\begin{equation}\label{zhegehao1}
\aligned
&\al^{2}\left((\ld_{k+1}-\ld_{i})+(\ld_{k+2}-\ld_{i})\right)\into |\nabla f|^2u_{i}^2\\
\leq &\al^{4}\into |\n f|^{4}u_{i}^{2}+\al^{2}\into\left|2\nabla f\cdot\nabla u_{i}+ u_{i}\Delta f \right|^2
+(\ld_{k+1}-\ld_{i})(\ld_{k+2}-\ld_{i}).
\endaligned
\end{equation}
From (\ref{zhegehao1}), we have
\begin{equation}\label{zhegehao2}
\aligned
&\left((\ld_{k+1}-\ld_{i})+(\ld_{k+2}-\ld_{i})\right)\into |\nabla f|^2u_{i}^2\\
\leq &\al^{2}\into |\n f|^{4}u_{i}^{2}
+\frac{1}{\al^{2}}(\ld_{k+1}-\ld_{i})(\ld_{k+2}-\ld_{i})+\into\left|2\nabla f\cdot\nabla u_{i}+ u_{i}\Delta f \right|^2.
\endaligned
\end{equation}
Using the Cauchy-Schwarz inequality in (\ref{zhegehao2}), we have
(\ref{zhegehao}).
\end{proof}

\begin{cor}\label{KeyLem3} Under the assumption of Lemma \ref{gapkeylemma}, for any real value function $f\in C^{3}(\Omega)\cap C^{2}(\overline{\Omega})$ satisfying  $|\n f|^2=1$,  we have
\begin{equation}\label{Key-Inq1}
 (\ld_{k+2}-\ld_{k+1})^2\leq 16\left(\into(\n f\cdot\n u_i)^2-\frac14\into (\Delta f)^2u_i^2-\frac12\into(\n(\Delta f)\cdot\n f) u_i^2\right)
\ld_{k+2}.
\end{equation}
Furthermore, we have
\begin{equation}\label{Key-Inq2}
\ld_{k+2}-\ld_{k+1}\leq 4\left(\ld_i-\frac14\into (\Delta f)^2u_i^2-\frac12\into(\n(\Delta f)\cdot\n f) u_i^2\right)^{\frac12}\sqrt{\ld_{k+2}}.
\end{equation}
 \end{cor}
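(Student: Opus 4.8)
The plan is to specialize Corollary \ref{yongzhegehao} to the normalization $|\n f|^2=1$ and then extract the gap $\ld_{k+2}-\ld_{k+1}$ by a completion-of-squares manipulation. First I would note that $|\n f|^2=1$ together with the orthonormality $\into u_i^2=1$ forces $\into|\n f|^2u_i^2=\into|\n f|^4u_i^2=1$. Substituting into (\ref{zhegehao}) therefore gives
\begin{equation*}
(\ld_{k+2}-\ld_i)+(\ld_{k+1}-\ld_i)\leq 2\sqrt{(\ld_{k+2}-\ld_i)(\ld_{k+1}-\ld_i)}+\into\left(2\n f\cdot\n u_i+u_i\Delta f\right)^2,
\end{equation*}
which rearranges into the clean form $\left(\sqrt{\ld_{k+2}-\ld_i}-\sqrt{\ld_{k+1}-\ld_i}\right)^2\leq\into\left(2\n f\cdot\n u_i+u_i\Delta f\right)^2.$

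Next I would convert the left side into the genuine gap. Writing $a=\ld_{k+2}-\ld_i$ and $b=\ld_{k+1}-\ld_i$, the identity $(\sqrt a-\sqrt b)^2=(a-b)^2/(\sqrt a+\sqrt b)^2$ gives $a-b=\ld_{k+2}-\ld_{k+1}$, while $\sqrt a+\sqrt b\leq 2\sqrt a=2\sqrt{\ld_{k+2}-\ld_i}\leq 2\sqrt{\ld_{k+2}}$ because $\ld_{k+2}\geq\ld_{k+1}$ and $\ld_i\geq\ld_1>0$. This yields
\begin{equation*}
\frac{(\ld_{k+2}-\ld_{k+1})^2}{4\ld_{k+2}}\leq\into\left(2\n f\cdot\n u_i+u_i\Delta f\right)^2,
\end{equation*}
and hence $(\ld_{k+2}-\ld_{k+1})^2\leq 4\ld_{k+2}\into\left(2\n f\cdot\n u_i+u_i\Delta f\right)^2.$

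The technical heart is to rewrite this integral in the asserted form. Expanding the square gives three terms, and the only nontrivial one is the cross term $4\into u_i(\Delta f)(\n f\cdot\n u_i)$. Using $u_i\n u_i=\tfrac12\n(u_i^2)$ and integrating by parts against the vector field $(\Delta f)\n f$ — whose boundary contribution drops since $u_i=0$ on $\partial\Omega$ — one computes $\n\cdot\big((\Delta f)\n f\big)=\n(\Delta f)\cdot\n f+(\Delta f)^2$, turning the cross term into $-2\into u_i^2\big(\n(\Delta f)\cdot\n f+(\Delta f)^2\big)$. Combining this with the remaining $\into(\Delta f)^2u_i^2$ produces
\begin{equation*}
\into\left(2\n f\cdot\n u_i+u_i\Delta f\right)^2=4\into(\n f\cdot\n u_i)^2-\into(\Delta f)^2u_i^2-2\into\big(\n(\Delta f)\cdot\n f\big)u_i^2,
\end{equation*}
and substituting this into the previous estimate gives exactly (\ref{Key-Inq1}) after factoring out the $4$. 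I expect this integration-by-parts bookkeeping to be the main source of potential sign and factor errors, so it deserves the closest attention.

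Finally, (\ref{Key-Inq2}) would follow from (\ref{Key-Inq1}) by a single Cauchy-Schwarz step: since $|\n f|^2=1$ we have the pointwise bound $(\n f\cdot\n u_i)^2\leq|\n f|^2|\n u_i|^2=|\n u_i|^2$, and integration by parts with orthonormality gives $\into|\n u_i|^2=\ld_i\into u_i^2=\ld_i$. Replacing $\into(\n f\cdot\n u_i)^2$ by $\ld_i$ in (\ref{Key-Inq1}) and taking square roots then yields (\ref{Key-Inq2}).
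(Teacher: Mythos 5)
Your proposal is correct and follows essentially the same route as the paper: specialize Corollary \ref{yongzhegehao} with $|\n f|^2=1$ to get $\bigl(\sqrt{\ld_{k+2}-\ld_i}-\sqrt{\ld_{k+1}-\ld_i}\bigr)^2\leq\into(2\n f\cdot\n u_i+u_i\Delta f)^2$, expand that integral by parts exactly as you do, and bound $\bigl(\sqrt{\ld_{k+2}-\ld_i}+\sqrt{\ld_{k+1}-\ld_i}\bigr)^2\leq 4\ld_{k+2}$ before finishing (\ref{Key-Inq2}) with the pointwise Cauchy--Schwarz bound $\into(\n f\cdot\n u_i)^2\leq\ld_i$. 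Your bookkeeping of the cross term and the boundary term is accurate, so no gaps remain.
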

\begin{proof} From Corollary \ref{yongzhegehao} and $|\n f|^2=1$, we have
\begin{equation*}
 \left((\ld_{k+2}-\ld_{i})+(\ld_{k+1}-\ld_{i})\right)-2\sqrt{(\ld_{k+2}-\ld_{i})(\ld_{k+1}-\ld_{i})}\leq\into
 \left(2\n f\cdot \n u_{i}+u_{i}\Delta f\right)^{2},
\end{equation*}
i.e.
\begin{equation*}
\left(\sqrt{\ld_{k+2}-\ld_{i}}-\sqrt{\ld_{k+1}-\ld_{i}}\right)^2\leq \into \left(2\n f\cdot \n u_{i}+u_{i}\Delta f\right)^{2}.
\end{equation*}
By integration by parts, we have
\begin{equation*}
\into \left(2\n f\cdot \n u_{i}+u_{i}\Delta f\right)^{2}=4\into(\n f\cdot\n u_i)^2-\into (\Delta f)^2u_i^2-2\into (\n(\Delta f)\cdot\n f) u_i^2.
\end{equation*}
Hence, we have
\begin{equation}\label{K1}
\left(\sqrt{\ld_{k+2}-\ld_{i}}-\sqrt{\ld_{k+1}-\ld_{i}}\right)^2\leq
4\into(\n f\cdot\n u_i)^2-\into (\Delta f)^2u_i^2-2\into(\n(\Delta
f)\cdot\n f) u_i^2
\end{equation}
Multiplying (\ref{K1}) by $\left(\sqrt{\ld_{k+2}-\ld_{i}}+\sqrt{\ld_{k+1}-\ld_{i}}\right)^2 $ on both sides, we can get
\begin{equation*}
\ba
(\ld_{k+2}-\ld_{k+1})^2\leq & 4\left(\into(\n f\cdot\n u_i)^2-\frac14\into (\Delta f)^2u_i^2-\frac12\into(\n(\Delta f)\cdot\n f) u_i^2\right)\\
&\times\left(\sqrt{\ld_{k+2}-\ld_{i}}+\sqrt{\ld_{k+1}-\ld_{i}}\right)^2\\
\leq& 16 \left(\into(\n f\cdot\n u_i)^2-\frac14\into (\Delta
f)^2u_i^2-\frac12\into(\n(\Delta f)\cdot\n f) u_i^2\right)\ld_{k+2},
\ea
\end{equation*}
which is  the inequality (\ref{Key-Inq1}).

From the Cauchy-Schwarz inequality and integration by parts, we
obtain
\begin{equation*}
(\ld_{k+2}-\ld_{k+1})^2\leq 16 \left(\ld_i-\frac14\into (\Delta
f)^2u_i^2-\frac12\into(\n(\Delta f)\cdot\n f) u_i^2\right)\ld_{k+2}.
\end{equation*}
Finally we have (\ref{Key-Inq2}).

\end{proof}

\begin{proof}[Proof of Theorem \ref{MainThm1} ]
Let
$
x_{1},\;x_{2},\cdots,x_{n}
$
be the standard coordinate functions in $\Bbb R^n$. Since $|\n x_l|=1, l=1,\cdots,n$, we can use Lemma \ref{KeyLem3}.
Taking
$$
f=x_{l},\,l=1,\cdots,n,\,\mbox{and}\,\, i=1
$$
in (\ref{Key-Inq1}) and then taking sum over $l$ from $1$ to $n$,   we have
\begin{equation}\label{whyeuli}
 \ba n(\ld_{k+2}-\ld_{k+1})^2\leq &16\ld_{k+2}\into\sum_{l=1}^n\left(\frac{\partial u_1}{\partial x_l}\right)^2\\
 =& 16\ld_1 \ld_{k+2}       .
\ea\end{equation}
From Theorem 3.1 in \cite{CY4} (see also (\ref{diffyong})), from (\ref{whyeuli}), we deduce
\begin{equation*}
\ba\ld_{k+2}-\ld_{k+1}\leq &4\sqrt{\frac{\ld_1}{n}}\sqrt{\ld_{k+2} }\\
\leq & 4\ld_1\sqrt{\frac{C_0(n)}{n}}(k+1)^{\frac12}\\
=&C_{n,\Omega}(k+1)^{\frac12},
\ea\end{equation*}
where $C_{n,\Omega}= 4\ld_1\sqrt{\frac{C_{0}(n)}{n}}$, $C_{0}(n)$ is given by $(\ref{diffyong})$.
Since $k$ is arbitrary, this completes the proof of Theorem \ref{MainThm1}.
\end{proof}

Although Corollary \ref{gapexample1} can be deduced directly by taking $a=b=1$ in Corollary \ref{MainThm2}, its proof is  interesting independently.  Here we give its proof for the upper half-plane model of hyperbolic space.
\begin{proof}[Proof of Corollary \ref{gapexample1}]
For convenience, we will use the upper half-plane model of the hyperbolic space, that is,
$$
\mathbb H^{n}(-1)=\{(x_{1},\cdots,x_{n})\in \Bbb{R}^n|x_{n}>0\}
$$
with the standard metric
$$
\mathrm{d}s^2=\frac{(\mathrm{d}x_{1})^2+\cdots+(\mathrm{d}x_{n})^2}{(x_{n})^2}.
$$
Taking
$
r=\log x_{n},
$
we have
$$
\mathrm{d}s^2=(\mathrm{d}r)^2+\mathrm{e}^{-2r}\sum_{i=1}^{n-1}(\mathrm{d}x_{i})^2.
$$
Since $|\n r|=1,\;\Delta r=-(n-1)$, taking
$
f=r\,\,\mbox{and}\, i=1
$
in (\ref{Key-Inq2}), we have
\begin{equation}\label{whyhyp}
\ba\ld_{k+2}-\ld_{k+1}\leq &4\left(\ld_1-\frac14\into (\Delta r)^2u_i^2-\frac12\into(\n(\Delta r)\cdot\n r) u_1^2\right)^{\frac12}\sqrt{\ld_{k+2}}\\
=&4\left(\ld_1-\frac{(n-1)^{2}}{4}\right)^{\frac12}\sqrt{\ld_{k+2}}.
\ea\end{equation}
By the result in
\cite{ChenCheng} (see also (\ref{chenchenganhan})), from (\ref{whyhyp}), we have
\begin{equation*}
\ba\ld_{k+2}-\ld_{k+1}\leq&  4\left(\ld_1-\frac{(n-1)^{2}}{4}\right)^{\frac12}\sqrt{C_0(n)\left(\ld_{1}
+\frac{n^2}{4}H_{0}^2\right)}(k+1)^{\frac1n}\\
=&C_{n,\Omega}(k+1)^{\frac1n},
\ea\end{equation*}
where $C_{n,\Omega}$ is defined by (\ref{Const1}).
Since this inequality holds for any $k$,  we can deduce (\ref{U-inequ3}).
\end{proof}

\section{Proof of Corollary \ref{MainThm2} }
\begin{proof}[Proof of Corollary  \ref{MainThm2}]
Assume that $\Omega$ is a bounded domain in an $n$-dimensional complete  noncompact Riemannian  manifold $(M,\,g)$ with sectional curvature $Sec$ satisfying $-a^2\leq Sec\leq -b^2$, where $0 \leq b\leq a $ are constants.  For $p \notin  \overline{\Omega}$ fixed, define the distance function by
$\rho(x)=\mbox{distance}(x,\,p)$.
From Proposition 2.2 in P.15 of \cite{schoenyau}, and $|\n \rho|=1,$  we have
\begin{equation}\label{3191}
\n \rho\cdot\n( \Delta \rho)=-|\text{\upshape Hess}\ \rho|^2-\text{\upshape Ric}( \n\rho, \n\rho).
\end{equation}
Let $0\leq h\leq h_1,\cdots, h_{n-1}\leq H$ be the eigenvalues of the $\mr{Hess}\rho$. Then we have
\be\label{youyongzhiyuma}
\ba
&2|\mbox{Hess}\rho|^2-(\Delta \rho)^2\\
=&2\sum_{i=1}^{n-1}h_{i}^2-\lf(\sum_{i=1}^{n-1}h_{i}\rt)^2\\
=&\sum_{i=1}^{n-1}h_{i}^2-\sum_{i\neq j}h_{i}h_{j}\\
\leq& h_{n-1}^2+h_{1}h_{2}+\cdots+h_{n-2}h_{n-1}-\sum_{i\neq j}h_{i}h_{j}\\
=&h_{n-1}^2-h_{1}h_{2}-\cdots-h_{n-2}h_{n-1}-\sum_{i\neq j\atop i,j\leq n-2}h_{i}h_{j}\\
\leq&H^2-(n-2)^2h^2.
\ea
\ee
From the Hessian comparison theorem (cf. \cite{wuhongxi}), under the conditions in Corollary \ref{MainThm2}, we have
\be\label{hesseigen}
a\frac{\cosh a\rho}{\sinh a \rho}\geq h_{n-1}\geq\cdots\geq h_1\geq b\frac{\cosh b\rho}{\sinh b \rho}.
\ee
Since $n\geq 3$ and $\frac{a^2}{\sinh^2a\rho}$ is a decreasing function of $a$, from (\ref{youyongzhiyuma}) and (\ref{hesseigen}), under the conditions in Corollary \ref{MainThm2}, we have
\be\label{youyong}
\ba
&2|\mbox{Hess}\rho|^2+2\mbox{Ric}(\n\rho,\,\n\rho)-(\Delta \rho)^2\\
\leq& a^2\frac{\cosh^2a\rho}{\sinh^2a\rho}-(n-2)^2b^2\frac{\cosh^2b\rho}{\sinh^2b\rho}-2(n-1)b^2\\
=&a^2+\frac{a^2}{\sinh^2a\rho}-(n-2)^2b^2-(n-2)^2\frac{b^2}{\sinh^2b\rho}-2(n-1)b^2\\
\leq&-(n-1)^2b^2+(a^2-b^2)+\frac{b^2}{\sinh^2b\rho}-(n-2)^2\frac{b^2}{\sinh^2b\rho}\\
\leq&-(n-1)^2b^2+(a^2-b^2).
\ea
\ee

Taking
$
f=\rho\,\,\mbox{and}\, i=1
$
in (\ref{Key-Inq2}), we have
\begin{equation}\label{why1}
\ld_{k+2}-\ld_{k+1}\leq 4\left(\ld_1-\frac14\into (\Delta \rho)^2u_1^2-\frac12\into(\n( \Delta \rho)\cdot\n \rho) u_1^2\right)^{\frac12}\sqrt{\ld_{k+2}}.
\end{equation}
From (\ref{3191}) and (\ref{youyong}), we obtain
\begin{equation}\label{why2}
\ba
&\ld_1-\frac14\into (\Delta \rho)^2u_1^2-\frac12\into(\n( \Delta \rho)\cdot\n \rho) u_1^2\\
=&\ld_1+\frac14\into\lf(2|\mr{Hess}\rho|^2+2\mr{Ric}(\n\rho,\,\n\rho)-(\Delta \rho)^2\rt)u_{1}^2\\
\leq &\ld_{1}-\frac{(n-1)^2}4b^2+\frac{a^2-b^2}{4}.
\ea\end{equation} By the result in \cite{ChenCheng} (see also
(\ref{chenchenganhan})), from (\ref{why1}) and (\ref{why2}), we have
\begin{equation*}
 \ba\ld_{k+2}-\ld_{k+1}\leq&4\left(\ld_{1}-\frac{(n-1)^2}4b^2+\frac{a^2-b^2}{4}\right)^{\frac12}
 \sqrt{C_{0}(n)\left(\ld_{1}+\frac{n^2}{4}H_{0}^2\right)} (k+1)^{\frac{1}{n}}\\
 \leq&C_{n,\Omega}(k+1)^{\frac{1}{n}},
\ea\end{equation*}
where $C_{n,\Omega}$ is defined by (\ref{Const2}). Since this inequality holds for any $k$,  we can deduce (\ref{U-inequ2}).
\end{proof}

\begin{flushleft}
Daguang Chen\\
Department of Mathematical Sciences, Tsinghua University, Beijing, 100084, P. R. China \\
E-mail: dgchen@math.tsinghua.edu.cn
\end{flushleft}

\begin{flushleft}
Tao Zheng\\
Department of Mathematics, Beijing Institute of Technology, Beijing 100081, P. R. China \\
E-mail: zhengtao08@amss.ac.cn
\end{flushleft}

\begin{flushleft}
Hongcang Yang\\
Hua Loo-Keng Key Laboratory of Mathematics, Chinese Academy of Sciences, Beijing 100080, P. R. China\\
E-mail:yanghc2@netease.com
\end{flushleft}


\begin{thebibliography}{99}
\bibitem {ab1989}M. S. Ashbaugh and R. D. Benguria, \emph{Optimal lower bound for the gap between
the first two eigenvalues of one-dimensional Schr¡§odinger operators with symmetric
single-well potentials}, Proc. Amer. Math. Soc. 105 (1989), no. 2, 419-424.



\bibitem {ab1991}M. S. Ashbaugh and R. D. Benguria, \emph{Proof of the Payne-P\'{o}lya-Weinberger conjecture}, Bull. Amer. Math. Soc., 25 (1991),19-29.



\bibitem {ab1992}M. S. Ashbaugh and R. D. Benguria, \emph{A sharp bound for the ratio of the first two eigenvalues of Dirichlet Laplacians and extensions}, Ann. of Math., 123 (1992), 601-628.



\bibitem {ab19921}M. S. Ashbaugh and R. D. Benguria, \emph{ A second proof of the Payne-P\'{o}lya-Weinberger conjecture}, Comm. Math. Phys., 147 (1992), 181-190.


\bibitem{ab93} M.  S. Ashbaugh and R. D. Benguria, {\it More bounds on eigenvalue ratios for Dirichlet Laplacians
 in $n$ dimension}, SIAM J. Math Anal.,  {\bf 24} (1993), 1622-1651.

\bibitem{ab932} M.  S. Ashbaugh and R. D. Benguria, {\it Isoperimetric bounds for higher eigenvalue ratios for the
 $n$-dimensional fixed membrane problem},  Proc. Royal Soc. Edinburgh,   {\bf 126A} (1993), 977-985.



\bibitem {ab1996}M. S. Ashbaugh and R. D. Benguria, \emph{ Bounds for ratios of the first, second, and third membrane eigenvalues},
\textbf{Nonlinear Problems in Applied Mathematica, in Honor of Ivar Stakgold on this Seventieth Birthday}, T. S. Angell, L. Pamela Cook, R. E. Kleinman, and W. E. Olmstead, editors, Society for Industrial and Applied Mathematics, Philadelphia, Pennsylvania, 1996,   30-42.



\bibitem{Ash98}
 M. S. Ashbaugh, \emph{Isoperimetric and universal inequalities for
 eigenvalues}, in Spectral theory and geometry
(Edinburgh,1998), E. B. Davies and Yu Safarov eds., London
Math. Soc. Lecture Notes, vol. \textbf{273}(1999), Cambridge Univ. Press,
Cambridge, 95--139.

\bibitem{Ash02} M. S. Ashbaugh,
\emph{The universal eigenvalue bounds of
Payne-P\'{o}lya-Weinberger, Hile-Prottter, and H.C. Yang},
 Proc. Indian Acad. Sci. Math. Sci. vol.  {112} (2002), 3-30.


\bibitem {AC11}B. Andrews and J. Clutterbuck, \emph{Proof of the fundamental gap conjecture}, J. Amer. Math. Soc. 24 (2011), 899-916.


\bibitem{berg1983}M. van den Berg, \emph{On condensation in the free-boson gas and the spectrum of the
Laplacian}, J. Statist. Phys. 31 (1983), no. 3, 623-637.

\bibitem {berger}Melvin S. Berger, Nonlinearity and Functional Analysis, Academic Press, New York-San Francisco-London, 1977.

\bibitem {chavel} I. Chavel,  Eigenvalues in Riemannian Geometry, Academic Press, New York, 1984.

\bibitem {ChenCheng} D. Chen and Q.-M. Cheng, \emph{Extrinsic estimates for estimates for eigenvalues of the Laplacian operator}, J. Math. Soc. Japan 60 (2008), 325-339.

\bibitem {CZL}D. Chen, T. Zheng and M. Lu, \emph{Eigenvalue estimates on domains in complete noncompact Riemannian manifolds}, Pacific Journal of Mathematics(2012), Vol. 255, No. 1£¬41-54.

\bibitem {CY2005} Q.-M. Cheng and H. C. Yang, \emph{Estimates on eigenvalues of Laplacian}, Math. Ann., 331 (2005), 445-460.

\bibitem{CY2006}Q.-M. Cheng and H. C. Yang, \emph{Inequalities for eigenvalues of Laplacian on domains and compact complex hypersurfaces in complex projective spaces} , J. Math. Soc. Japan, No. 2, Vol. 58 (2006), 545-561.


\bibitem {CY4} Q.-M. Cheng and H. C. Yang,
     \emph{Bounds on eigenvalues of Dirichlet Laplacian},
     Math. Ann.,  337 (2007), 159-175.

\bibitem {CY2009}Q.-M. Cheng and H. C. Yang,
     \emph{Estimates for eigenvalues on Riemannian manifolds},
    J. Differential Equations.,  247 (2009), 2270-2281.

\bibitem {elsoufi} A. El Soufi, E. M. Harrell II and S. llias, \emph{Universal inequalities for the eigenvalues of Laplace and Schr\"{o}dinger operators
on submanifolds},  Trans. Amer. Math. Soc. 361 (2009), 2337-2350.

\bibitem {harrell}E. M. Harrell II, \emph{Some geometric bounds on eigenvalue gaps}, Comm. Partial Differential Equations, 18 (1993), 179-198.

\bibitem {harrellmichel}E. M. Harrell II and P. L. Michel, \emph{Commutator bounds for eigenvalues with applications to spectral geometry}, Comm. Partial Differential Equations, 19 (1994), 2037-2055.

\bibitem {harrellstubbe}E. M. Harrell II and J. Stubbe, \emph{On trace identities and universal eigenvalue estimates for some partial
differential operators}, Trans. Amer. Math. Soc., 349 (1997), 1797-1809.


\bibitem {harrellmichel2007}E. M. Harrell II, \emph{Commutator, eigenvalue gaps and mean curvature in the theory of Schr\"{o}inger operators}, Comm. Partial Differential Equations, 32 (2007), 401-413.

\bibitem{HP} G.~N. Hile , M.~H.~Protter, \emph{Inequalities for
    eigenvalues of the Laplacian}, Indiana Univ.\ Math.~J.,  29
    (1980), 523-538.

\bibitem {Nash}J. Nash, \emph{The imbedding problem for Riemannian manifolds},Vol. 63, No. 1 (1956), 20-63.

\bibitem {Leung} P.-F. Leung, \emph{On the consecutive eigenvalues of the Laplacian of a compact minimal submanifold in a sphere}, J. Aust. Math. Soc., 50 (1991), 409-426.

\bibitem {PLi}P. Li, \emph{Eigenvalue estimates on homogeneous manifolds}, Comment. Math. Helv., 55(1980), 347-363.

\bibitem{PPW55} L.~E., Payne, G.~P\'{o}lya and H.~F.~Weinberger, \emph{Sur le quotient de deux fr\'{e}quences propres cons\'{e}cutives},
Comptes Rendus Acad. Sci. Paris, 241 (1955), 917-919.

\bibitem{PPW56} L.~E., Payne, G.~P\'{o}lya and H.~F.~Weinberger, \emph{On the ratio of consecutive eigenvalues}, J.~Math.\ and Phys.,  35 (1956), 289-298.




\bibitem{schoenyau}R. Schoen and S. T. Yau, Lectures on Differntial Geometry, Boston: International Press, 1994.



\bibitem{wyau} I. M. Singer, B. Wong, S. T. Yau and Stephen S. T. Yau, \emph{An estimate of the gap of the first two eigenvalues in the Schr\"{o}dinger operator}, Ann. Scuola. Norm. Sup. Pisa, Series IV,v. XII, n 2(1985),3 19-333.



\bibitem{sun2008}  H. Sun, Q.-M. Cheng and H. C. Yang, \emph{Lower order eigenvalues of Dirichlet Laplacian}, Manuscripta math., 125 (2008), 139-156.


\bibitem{thompson1969} C. J. Thompson, {\it On the ratio of consecutive eigenvalues
in $n$-dimensions}, Stud. Appl. Math., {\bf 48} (1969), 281-283.

\bibitem{weyl} H. Weyl, \emph{Der Asymptotische Verteilungsgesetz der Eigenwerte Linearer partieller Differential-gleichungen},
Math. Ann. 71 (1912), 441-469.


\bibitem{wuhongxi} H. Wu, L.C. Shen and Y. L. Yu, Introduction to Riemannian Geometry (in Chinese), Peking University Press, 1989.


\bibitem{Yang91}H. C. Yang, \emph{An estimate of
the difference between consecutive eigenvalues}, preprint IC/91/60
of ICTP, Trieste, 1991.



\bibitem {YangYau} P. C. Yang and S. T. Yau, \emph{Eigenvalues of the Laplacian of compact Riemannian surfaces and minimal submanifolds}, Ann. Scuola Norm. Sup. Pisa CI. Sci., 7 (1980), 55-63.

\bibitem{yau1986}S. T. Yau, \emph{Nonlinear analysis in geometry}, Monographies de L¡¯Enseignement
Math¡äematique, vol. 33, L¡¯Enseignement Math¡äematique, Geneva, 1986. S¡äerie des
Conf¡äerences de l¡¯Union Math¡äematique Internationale, 8.







\bibitem {yuzhong}Q. Yu and J. Zhong, \emph{Lower bounds of the gap between the first and second eigenvalues
of the Sch\"{o}dinger operator}, Trans. Amer. Math. Soc., 294 (1986), no. 1, 341-349.





\end{thebibliography}
\end{document}